\pgfplotsset{compat=1.18}
\newtheorem{theorem}{Theorem}[section]
\newtheorem{lemma}[theorem]{Lemma}
\newtheorem{proposition}[theorem]{Proposition}
\newtheorem{definition}[theorem]{Definition}
\newtheorem{remark}[theorem]{Remark}
\newcommand{\bigO}{\mathcal{O}}
\title{\textbf{Canonical Rough Path over Tempered Fractional Brownian
Motion: Existence, Construction, and Applications}}
\author{Atef Lechiheb\\
Departement of mathematics\\ Toulouse School of Economics\\
Universit\'e Toulouse Capitole, Toulouse, France\\
Email: \href{mailto:atef.lechiheb@tse-fr.eu}{atef.lechiheb@tse-fr.eu}}
\begin{document}

\maketitle

%----------------------------------------------------------------------
\begin{abstract}
We construct a canonical geometric rough path over $d$-dimensional
tempered fractional Brownian motion (tfBm) for any Hurst parameter
$H > 1/4$ and tempering parameter $\lambda > 0$. The main challenge
stems from the non-homogeneous nature of the tfBm covariance, which
exhibits a power-law structure at small scales and exponential decay at
large scales. Our primary contribution is a detailed analysis of this
covariance, proving it has finite 2D $\rho$-variation for
$\rho = 1/(2H)$. This verifies the criterion of Friz and Victoir,
guaranteeing the existence of a rough path lift. We provide an
explicit construction of the rough path
$\mathbf{B}_{H,\lambda} = (B_{H,\lambda}, \mathbb{B}_{H,\lambda})$
via $L^2$-limits, establishing its basic properties with explicit
constants $C(H,\lambda,T)$. As direct consequences, we obtain:
(i)~a complete characterisation of integration regimes, with Young
integration applicable for $H > 1/2$ and rough path theory necessary
and sufficient for $H \in (1/4, 1/2]$; (ii)~the well-posedness of
rough differential equations driven by tfBm, together with a
Milstein-type numerical scheme of optimal strong convergence rate
$\bigO(n^{-H})$; and (iii)~the foundation for signature calculus for
tfBm, including the existence and factorial decay of the signature.
The boundary case $H = 1/2$ is treated explicitly, recovering the
Stratonovich lift of the Ornstein--Uhlenbeck process and, as
$\lambda \to 0^+$, classical It\^o calculus. Numerical experiments
confirm the theoretical convergence rates $\bigO(N^{-2H})$ for the
L\'evy area approximation and $\bigO(n^{-H})$ for the Milstein scheme.
This work provides the first comprehensive pathwise framework for
stochastic calculus with tfBm.
\end{abstract}
%----------------------------------------------------------------------

\noindent\textbf{Keywords:} Tempered fractional Brownian motion, rough
path theory, Gaussian processes, stochastic integration, L\'evy area,
signature calculus, rough volatility, Ornstein--Uhlenbeck process.

\noindent\textbf{2020 Mathematics Subject Classification:}
60L20, 60G22, 60H05, 60G15, 65C30.

%======================================================================
\section{Introduction}
%======================================================================

Tempered fractional Brownian motion (tfBm), introduced by Meerschaert
and Sabzikar \cite{MeerschaertSabzikar2013}, has emerged as a key
stochastic process for modelling phenomena exhibiting
\emph{semi-long-range dependence}. Defined for a Hurst parameter
$H \in (0,1)$ and a tempering parameter $\lambda > 0$, it generalises
fractional Brownian motion (fBm) by incorporating an exponential
tempering in its kernel. This preserves the local self-similarity and
$H$-H\"older regularity of fBm while ensuring that increment
correlations decay exponentially for large lags. This hybrid
structure---power-law at small scales and exponential decay at large
scales---provides a more physically realistic model than pure
power-law processes in turbulence, geophysics, and financial time
series, where observations often show a cut-off in long-range
dependence \cite{Gatheral2018}.

From a stochastic analysis perspective, the development of a pathwise
integration theory for tfBm is fundamental. For $H > 1/2$, the Young
integration framework \cite{Young1936} provides a pathwise Stieltjes
integral. For standard fBm with $H > 1/4$, the theory of rough paths
\cite{Lyons1998, FrizVictoirBook} yields a canonical pathwise lift and
a robust theory for differential equations; a recent contribution in
this direction is the work of \cite{FrizKifer2024} on diffusion
approximation in averaging. However, despite the established
stochastic calculus for tfBm \cite{MeerschaertSabzikar2014}, a
\emph{pathwise rough path theory} has remained an open problem. The
core difficulty stems from the non-homogeneous covariance structure of
tfBm, which interpolates between fBm-like behaviour at small scales
and essentially finite-range dependence at large scales, breaking the
self-similarity used in classical constructions \cite{CoutinQian2002}.

\medskip
\noindent\textbf{Novelty and challenges.}
Unlike standard fBm, the non-homogeneous structure of tfBm requires
delicate covariance estimates capturing both the power-law behaviour
at small scales and the exponential decay at large scales. Our
decomposition technique (Theorem~\ref{thm:covariance_decomposition})
isolates these effects and enables the 2D $\rho$-variation analysis.

\medskip
\noindent\textbf{Main contributions.}
\begin{enumerate}[label=(\roman*)]
  \item A detailed analysis of the tfBm covariance $R_{H,\lambda}$
        with the novel decomposition
        $R_{H,\lambda} = R_H + E^{(1)}_{H,\lambda} + E^{(2)}_{H,\lambda}$
        (Theorem~\ref{thm:covariance_decomposition}).
  \item Proof that $R_{H,\lambda}$ has finite 2D $\rho$-variation for
        $\rho = 1/(2H)$ (Theorem~\ref{thm:2d_rho_variation}), with a
        corrected, complete proof of the partition estimate
        (Lemma~\ref{lemma:polynomial_partition}).
  \item Explicit $L^2$-convergent construction of the canonical rough
        path $\mathbf{B}_{H,\lambda}$
        (Theorem~\ref{thm:rough_path_existence}).
  \item Convergence rate $\bigO(N^{-2H})$ for the L\'evy area
        (Proposition~\ref{prop:levy_convergence_rate}).
  \item Complete characterisation of integration regimes and well-posed
        rough differential equations (Section~\ref{sec:consequences}).
  \item Explicit treatment of the boundary case $H=1/2$, recovering
        classical It\^o calculus
        (Section~\ref{subsec:H_half}).
  \item Numerical validation with Python implementation and detailed
        discussion of limitations (Section~\ref{sec:numerics}).
\end{enumerate}

\medskip
\noindent The condition $H>1/4$ emerges naturally from the requirement
that the L\'evy area be definable in $L^2$. As $\lambda \to 0^+$
we recover the classical fBm rough path \cite{CoutinQian2002}; for
$H>1/2$ the rough path integral coincides with Young's.

\medskip
\noindent\textbf{Article structure.}
Section~\ref{sec:preliminaries} introduces tfBm, the It\^o isometry,
the case $H=1/2$, and the Friz--Victoir criterion.
Section~\ref{sec:main_results} presents the core technical results.
Section~\ref{sec:consequences} develops the consequences.
Section~\ref{sec:numerics} presents numerical experiments.
Proofs are collected in the appendices.

%======================================================================
\section{Preliminaries}
\label{sec:preliminaries}
%======================================================================

\subsection{Tempered Fractional Brownian Motion}

\begin{definition}[Tempered fractional Brownian motion
\cite{MeerschaertSabzikar2013}]
\label{def:tfbm}
For $H \in (0,1)$ and $\lambda > 0$, the \emph{tempered fractional
Brownian motion} (tfBm)
$B_{H,\lambda} = (B_{H,\lambda}(t))_{t \ge 0}$ is the centred
Gaussian process defined by
\begin{equation}
\label{eq:def_tfbm}
B_{H,\lambda}(t) = \frac{1}{\Gamma(H+\frac{1}{2})} \int_{\mathbb{R}}
\Bigl[ e^{-\lambda(t-s)_+}(t-s)_+^{H-\frac{1}{2}}
      - e^{-\lambda(-s)_+}(-s)_+^{H-\frac{1}{2}} \Bigr] \, dW(s),
\end{equation}
where $W$ is a standard two-sided Brownian motion and
$x_+ = \max(x,0)$.
\end{definition}

Its covariance function
$R_{H,\lambda}(s,t) := \mathbb{E}[B_{H,\lambda}(s)B_{H,\lambda}(t)]$
satisfies
\begin{equation}
\label{eq:covariance_tfbm}
R_{H,\lambda}(s,t)
= \tfrac{1}{2}\bigl[ C_{H,\lambda}(t) + C_{H,\lambda}(s)
               - C_{H,\lambda}(|t-s|) \bigr],
\end{equation}
where $C_{H,\lambda}(t) = \mathbb{E}[B_{H,\lambda}(t)^2]$. Sample
paths are almost surely locally $\alpha$-H\"older continuous for any
$\alpha < H$ and have finite $p$-variation for $p > 1/H$
\cite{MeerschaertSabzikar2014, AzmoodehMishuraSabzikar2020}. As
$\lambda \to 0^+$, $B_{H,\lambda}$ converges in law to fBm $B_H$.

%----------------------------------------------------------------------
\subsection{It\^o Isometry and Gaussian Properties}
\label{subsec:ito_isometry}
%----------------------------------------------------------------------

\textbf{It\^o isometry for standard Brownian motion.}
For $W$ and any square-integrable adapted process $f$,
\[
\mathbb{E}\!\left[\Bigl(\int_0^T f(s)\,dW(s)\Bigr)^{\!2}\right]
= \mathbb{E}\!\left[\int_0^T f(s)^2\,ds\right].
\]

\textbf{Isometry for tfBm (deterministic integrands).}
Since tfBm is \emph{not} a semimartingale for $H \ne 1/2$, the
stochastic integral for adapted integrands requires a separate
construction \cite{MeerschaertSabzikar2014}. For deterministic
$f \in L^2(\mathbb{R})$, the Wiener integral with respect to tfBm
satisfies
\begin{equation}
\label{eq:wiener_isometry}
\mathbb{E}\!\left[\Bigl(\int_{\mathbb{R}} f(s)\,dB_{H,\lambda}(s)
\Bigr)^{\!2}\right]
= \int_{\mathbb{R}\times\mathbb{R}}
  f(s)\,f(t)\,\frac{\partial^2 R_{H,\lambda}}{\partial s\,\partial t}
  (s,t)\,ds\,dt.
\end{equation}
Explicitly, for $H > 1/2$ and $s \ne t$,
\[
\frac{\partial^2 R_{H,\lambda}}{\partial s\,\partial t}(s,t)
= \frac{(H-\tfrac{1}{2})^2}{\Gamma(H-\tfrac{1}{2})^2}
\int_{-\infty}^{\min(s,t)}
e^{-\lambda(s-u)}(s-u)^{H-3/2}
e^{-\lambda(t-u)}(t-u)^{H-3/2}\,du.
\]
This kernel behaves like $c_H|s-t|^{2H-2}$ near the diagonal
(locally integrable for $H>1/2$) and decays exponentially for large
$|s-t|$. The isometry \eqref{eq:wiener_isometry} is used in the $L^2$
convergence proofs for the L\'evy area.

\begin{remark}
For $H \in (1/4,1/2]$, this kernel is not integrable near the
diagonal and the It\^o isometry fails. The rough path integral of
Section~\ref{sec:consequences} provides the correct replacement, and
the two integrals agree for adapted integrands by
Theorem~\ref{thm:regimes}(iii).
\end{remark}

%----------------------------------------------------------------------
\subsection{The Case $H = 1/2$: Ornstein--Uhlenbeck Process}
\label{subsec:H_half}
%----------------------------------------------------------------------

Setting $H = 1/2$ in \eqref{eq:def_tfbm} gives, up to a stationary
$L^2$-correction that vanishes as $t\to\infty$:
\begin{equation}
\label{eq:OU_def}
B_{1/2,\lambda}(t)
= \int_{-\infty}^{t} e^{-\lambda(t-s)}\,dW(s)
  - \int_{-\infty}^{0} e^{-\lambda(-s)}\,dW(s),
\end{equation}
which is the centred \emph{Ornstein--Uhlenbeck} (OU) process. As
$\lambda \to 0^+$ with $H = 1/2$, the process converges to standard
Brownian motion. Several properties simplify for $H=1/2$:

\begin{enumerate}[label=(\alph*)]
  \item \textbf{Covariance.}
    $R_{1/2,\lambda}(s,t) = \frac{1}{2\lambda}
    (e^{-\lambda|t-s|} - e^{-\lambda(s+t)})$,
    with variance
    $C_{1/2,\lambda}(t) = \frac{1}{2\lambda}(1-e^{-2\lambda t})
    \to \frac{1}{2\lambda}$ as $t\to\infty$.

  \item \textbf{$\rho$-variation.}
    With $\rho = 1/(2 \cdot \frac{1}{2}) = 1$, one has
    $V_1(R_{1/2,\lambda}) \le \frac{T}{2\lambda} < \infty$, so the
    Friz--Victoir criterion holds with the minimal value $\rho=1$.

  \item \textbf{Rough path.}
    The canonical geometric rough path is the Stratonovich rough path
    $\mathbf{B}_{1/2,\lambda}
    = (B_{1/2,\lambda},\tfrac{1}{2}B_{1/2,\lambda}
       \otimes B_{1/2,\lambda})$.
    The L\'evy area (antisymmetric part) vanishes for $d=1$.

  \item \textbf{It\^o formula.}
    For $f \in C^2(\mathbb{R})$:
    \[
    f(B_{1/2,\lambda}(T))
    = f(B_{1/2,\lambda}(0))
    + \int_0^T f'(B_{1/2,\lambda}(t))\,dB_{1/2,\lambda}(t)
    + \tfrac{1}{2}\int_0^T
      f''(B_{1/2,\lambda}(t))\,e^{-2\lambda t}\,dt.
    \]
    As $\lambda\to 0^+$, $e^{-2\lambda t}\to 1$ and this reduces to
    the classical It\^o formula with quadratic variation $\int_0^T
    f''(W_t)\,dt/2$.

  \item \textbf{Milstein scheme.}
    For $H=1/2$, scheme \eqref{eq:milstein_scheme} converges at
    rate $\bigO(n^{-1/2})$, the classical optimal strong rate for
    Stratonovich SDEs.

  \item \textbf{Brownian limit.}
    As $\lambda\to 0^+$, $B_{1/2,\lambda}\to W$ and all formulae
    reduce to their classical Brownian motion counterparts.
\end{enumerate}

%----------------------------------------------------------------------
\subsection{Rough Paths and the Friz--Victoir Criterion}
%----------------------------------------------------------------------

For $p \in [2,3)$, a \emph{rough path} over $\mathbb{R}^d$ is a pair
$\mathbf{X} = (X, \mathbb{X})$ where $X : [0,T]\to\mathbb{R}^d$ has
finite $p$-variation and
$\mathbb{X} : [0,T]^2 \to \mathbb{R}^d\otimes\mathbb{R}^d$ has finite
$p/2$-variation, satisfying Chen's relation
$\mathbb{X}_{s,t} = \mathbb{X}_{s,u} + \mathbb{X}_{u,t}
 + X_{s,u}\otimes X_{u,t}$ for $s \le u \le t$. A rough path is
\emph{geometric} if it is the limit of smooth rough paths in the
$p$-variation topology. Comprehensive references are
\cite{FrizVictoirBook, FrizHairer2014}.

For a centred Gaussian process $X$ with stationary increments, the
\emph{incremental covariance} is
$R(s,t;u,v) := \mathbb{E}[(X_t-X_s)(X_v-X_u)]$,
and the \emph{2D $\rho$-variation} of $R$ over $[0,T]$ is
\begin{equation}
\label{eq:2d_rho_var_def}
V_\rho(R) := \sup_{\mathcal{P}} \Bigl(
  \sum_{i,j} \bigl| R(t_i,t_{i+1}; t_j, t_{j+1}) \bigr|^\rho
\Bigr)^{1/\rho},
\end{equation}
where the supremum is over all finite partitions $\mathcal{P}$ of
$[0,T]$.

\begin{theorem}[Friz--Victoir criterion
{\cite[Chapter~15]{FrizVictoirBook}}]
\label{thm:friz_victoir}
Let $X$ be a centred Gaussian process with stationary increments and
covariance $R$. If there exists $\rho \in [1,2)$ such that
$V_\rho(R) < \infty$, then $X$ admits a canonical geometric rough path
lift $\mathbf{X} = (X, \mathbb{X})$.
\end{theorem}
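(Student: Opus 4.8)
The plan is to reproduce the classical Gaussian rough path construction underlying the criterion: regularise $X$, read off uniform moment bounds for the iterated integrals of the regularisations directly from the hypothesis $V_\rho(R)<\infty$, and pass to the limit by a Kolmogorov-type argument in a $p$-variation rough path metric. Fix a sequence of partitions $\mathcal{D}_n$ of $[0,T]$ with mesh tending to $0$, and let $X^n$ be the piecewise-linear interpolation of $X$ along $\mathcal{D}_n$. Each $X^n$ has bounded variation, hence a canonical lift $\mathbf{X}^n=(X^n,\mathbb{X}^n)$ with $\mathbb{X}^n_{s,t}=\int_s^t (X^n_r-X^n_s)\otimes dX^n_r$; by construction this is a geometric rough path satisfying Chen's relation. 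A first routine check is that the covariance $R^n$ of $X^n$ has finite 2D $\rho$-variation bounded, uniformly in $n$, by a fixed multiple of $V_\rho(R)$ --- this is where piecewise-linear interpolation is convenient, since each rectangular increment of $R^n$ is an average of rectangular increments of $R$.

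The crux is the uniform $L^2$ estimate for the iterated integrals. For $s<t$ write $\omega(s,t)$ for the $\rho$-th power of the 2D $\rho$-variation of $R$ computed with partitions of $[s,t]$; this is a control. The first level is immediate: $\E[|X^n_{s,t}|^2]=R^n(s,t;s,t)\le \omega(s,t)^{1/\rho}$. For the second level, $\mathbb{X}^n_{s,t}$ lies in the second Wiener chaos (the components of $X$ being independent copies), and a direct computation expresses $\E[|\mathbb{X}^n_{s,t}|^2]$ as a $2$D Young--Stieltjes integral over $[s,t]^2\times[s,t]^2$ whose integrand is built from $R^n$ and its rectangular increments. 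The multidimensional Young (Towghi / Friz--Victoir) inequality --- valid precisely because $\tfrac1\rho+\tfrac1\rho>1$ when $\rho<2$ --- bounds this by a constant times $\omega(s,t)^{2/\rho}$, uniformly in $n$. This is the only place the hypothesis $\rho<2$ is used, and $\rho=2$ is exactly the borderline at which the relevant double integral ceases to converge.

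Hypercontractivity on Wiener chaos then upgrades the $L^2$ bounds to $L^q$ for every $q<\infty$, giving $\E[|X^n_{s,t}|^q]^{1/q}\lesssim_q \omega(s,t)^{1/(2\rho)}$ and $\E[|\mathbb{X}^n_{s,t}|^q]^{1/q}\lesssim_q \omega(s,t)^{1/\rho}$. Repeating the computation for the differences $X^n-X^m$ and $\mathbb{X}^n-\mathbb{X}^m$ yields the same bounds multiplied by a factor $a_{n,m}\to 0$ as $n,m\to\infty$, controlled by the mesh. The Kolmogorov-type criterion for rough paths (Friz--Victoir, Ch.~3 and 15) shows $(\mathbf{X}^n)$ is Cauchy, hence convergent, in the homogeneous $p$-variation rough path metric for every $p>2\rho$. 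The limit $\mathbf{X}=(X,\mathbb{X})$ lifts $X$, inherits Chen's relation and geometricity from the $\mathbf{X}^n$, and --- by the uniform estimates --- does not depend on the approximating sequence, which is the meaning of "canonical". (When $2\rho<3$ the level-$2$ pair $(X,\mathbb{X})$ already constitutes the rough path; when $2\rho\ge 3$ one carries the third tensor level through the identical argument.)

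The main obstacle is the second-level estimate: recognising $\E[|\mathbb{X}^n_{s,t}|^2]$ as a genuine $2$D Young integral against $R^n$ and controlling it by $\omega(s,t)^{2/\rho}$ via the Towghi/Friz--Victoir inequality, together with the uniform-in-$n$ domination of the $\rho$-variation of $R^n$ by $V_\rho(R)$. Everything else --- interpolation, hypercontractivity, and the Kolmogorov rough path criterion --- is by now standard machinery once that estimate is available.
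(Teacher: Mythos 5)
Your outline is correct and is essentially the standard Friz--Victoir argument (piecewise-linear lifts, uniform 2D $\rho$-variation control of the interpolated covariance, the Towghi/Young two-dimensional estimate using $1/\rho+1/\rho>1$, hypercontractivity, and the Kolmogorov rough path criterion), which is precisely the proof the paper invokes: the paper does not prove this theorem itself but cites it from Chapter~15 of Friz--Victoir. Your parenthetical remark that a third tensor level is needed when $2\rho\ge 3$ is a point the paper's statement glosses over, and your identification of the second-level $2$D Young estimate as the crux is accurate.
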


For standard fBm, $V_\rho(R_H) < \infty$ for $\rho = 1/(2H)$, giving
$H > 1/4$ \cite{CoutinQian2002}. Our main task is to establish the
analogue for the non-homogeneous covariance $R_{H,\lambda}$.

%======================================================================
\section{Main Results}
\label{sec:main_results}
%======================================================================

%----------------------------------------------------------------------
\subsection{Covariance Analysis and 2D $\rho$-Variation}
%----------------------------------------------------------------------

\begin{theorem}[Covariance decomposition]
\label{thm:covariance_decomposition}
Let $R_{H,\lambda}$ denote the covariance of tfBm and $R_H$ that of
standard fBm. For any $s,t \ge 0$,
\begin{equation}
\label{eq:cov_decomp}
R_{H,\lambda}(s,t)
= R_H(s,t) + E^{(1)}_{H,\lambda}(s,t) + E^{(2)}_{H,\lambda}(s,t),
\end{equation}
where the error terms satisfy the pointwise estimates
\begin{align}
\bigl|E^{(1)}_{H,\lambda}(s,t)\bigr|
  &\le \frac{C_1(H)}{\Gamma(2H)} \, \lambda^{2H} \, |t-s|^2,
  \label{eq:error_poly} \\[4pt]
\bigl|E^{(2)}_{H,\lambda}(s,t)\bigr|
  &\le \frac{C_2(H)}{\Gamma(2H)\,\lambda^{2H}} \;
       e^{-c(H)\lambda\,d(s,t)},
  \label{eq:error_exp}
\end{align}
with $d(s,t) = \max(s,t,|t-s|)$ and explicit constants
\begin{equation}
\label{eq:explicit_constants}
C_1(H) = \frac{\Gamma(2H+2)}{4\,\Gamma\!\bigl(H+\tfrac{1}{2}\bigr)^2},
\quad
c(H) = \min\!\left\{\tfrac{1}{2},\, \tfrac{H}{2}\right\},
\quad
C_2(H) = \Bigl(\frac{2H}{c(H)\,e}\Bigr)^{2H}.
\end{equation}
\end{theorem}

\begin{proof}[Sketch of proof]
Taylor-expand the exponential factors in the moving-average kernel
representation of $R_{H,\lambda}$. The polynomial error $E^{(1)}$
arises from the second-order Taylor term; the exponential error
$E^{(2)}$ bounds the remainder. The complete proof is given in
Appendix~\ref{app:covariance}.
\end{proof}

\begin{remark}
\label{rem:cov_decomp}
Estimate \eqref{eq:error_poly} shows that at small scales
($|t-s| \ll \lambda^{-1}$), tfBm behaves like fBm up to a correction
$\bigO(\lambda^{2H}|t-s|^2)$. Estimate \eqref{eq:error_exp}
guarantees exponential decay of correlations at large separations.
For $H=1/2$: using the Legendre duplication formula
$\Gamma(H+\tfrac{1}{2})\Gamma(H+1)=\frac{\sqrt\pi}{2^{2H}}\Gamma(2H+1)$,
one gets $C_1(\tfrac{1}{2}) = \Gamma(3)/(4\Gamma(1)^2)= \tfrac{1}{2}$
and $c(\tfrac{1}{2}) = \tfrac{1}{4}$, consistently with the OU
covariance structure.
\end{remark}

We now prove the key partition estimate that underlies the 2D
$\rho$-variation theorem. The proof is complete and self-contained.

\begin{lemma}[Partition estimate for exponentially-weighted sums]
\label{lemma:polynomial_partition}
Let $\alpha > 1$, $\beta > 0$, and let
$\mathcal{P} = \{0=t_0 < t_1 < \cdots < t_N=T\}$ be a partition of
$[0,T]$ with mesh $\delta := \max_{0\le i\le N-1}(t_{i+1}-t_i)$.
Write $\Delta_i := t_{i+1}-t_i$. Then
\begin{equation}
\label{eq:partition_bound}
\Sigma(\mathcal{P}) :=
\sum_{i,j=0}^{N-1} \Delta_i^{\alpha}\,\Delta_j^{\alpha}\,
e^{-\beta|t_i-t_j|}
\le C(\alpha,\beta,T)\,\delta^{2\alpha-2},
\end{equation}
where
\begin{equation}
\label{eq:C_bound}
C(\alpha,\beta,T)
= T + \frac{2T}{\beta}(1+c_0)
\end{equation}
for some absolute constant $c_0>0$, and $C(\alpha,\beta,T)$ is
bounded uniformly in $\delta\in(0,1]$.
\end{lemma}

\begin{proof}
Split the double sum into diagonal and off-diagonal parts:
\begin{equation}
\label{eq:split}
\Sigma(\mathcal{P})
= \underbrace{\sum_{i=0}^{N-1}\Delta_i^{2\alpha}}_{S_{\rm diag}}
+ \underbrace{2\sum_{0\le j<i\le N-1}
  \Delta_i^{\alpha}\,\Delta_j^{\alpha}\,e^{-\beta(t_i-t_j)}}_{S_{\rm off}}.
\end{equation}

\textbf{Diagonal term.}
Since $\Delta_i \le \delta$ for all $i$ and there are $N \le T/\delta$
intervals,
\begin{equation}
\label{eq:diag}
S_{\rm diag}
= \sum_{i=0}^{N-1}\Delta_i^{2\alpha}
\le N\,\delta^{2\alpha}
\le \frac{T}{\delta}\cdot\delta^{2\alpha}
= T\,\delta^{2\alpha-1}.
\end{equation}
Since $\alpha > 1$ implies $2\alpha-1 > 1 > 0$, we have
$S_{\rm diag} \le T\,\delta^{2\alpha-1} \to 0$ as $\delta\to 0$.

\textbf{Off-diagonal term.}
Fix $i \in \{1,\ldots,N-1\}$ and bound the inner sum over $j<i$.
Since the partition points are ordered, $t_i - t_j \ge \sum_{k=j}^{i-1}\Delta_k
\ge (i-j)\,\delta$, so $e^{-\beta(t_i-t_j)} \le e^{-\beta(i-j)\delta}$.
Therefore:
\begin{equation}
\label{eq:inner_sum}
\sum_{j=0}^{i-1}\Delta_j^{\alpha}\,e^{-\beta(t_i-t_j)}
\le \delta^{\alpha}\sum_{k=1}^{\infty}e^{-\beta k\delta}
= \delta^{\alpha}\cdot\frac{e^{-\beta\delta}}{1-e^{-\beta\delta}}.
\end{equation}
For $\delta \in (0,1]$, the function $x\mapsto e^{-x}/(1-e^{-x})$
is decreasing on $(0,\infty)$ and $e^{-x}/(1-e^{-x}) \le 1/x
\cdot (1+x/2+\mathcal{O}(x^2))$, so
\begin{equation}
\label{eq:geo_asymp}
\frac{e^{-\beta\delta}}{1-e^{-\beta\delta}}
\le \frac{1}{\beta\delta}(1+c_0\delta)
\quad\text{for some absolute } c_0>0.
\end{equation}
Substituting \eqref{eq:geo_asymp} into \eqref{eq:inner_sum}:
\begin{equation}
\label{eq:inner_bound}
\sum_{j=0}^{i-1}\Delta_j^{\alpha}\,e^{-\beta(t_i-t_j)}
\le \frac{\delta^{\alpha-1}}{\beta}(1+c_0\delta).
\end{equation}
Now sum \eqref{eq:inner_bound} over all $i$ with each prefactor
$\Delta_i^{\alpha}$:
\begin{align}
S_{\rm off}
&= 2\sum_{i=1}^{N-1}\Delta_i^{\alpha}
  \sum_{j=0}^{i-1}\Delta_j^{\alpha}\,e^{-\beta(t_i-t_j)}
\le 2\sum_{i=1}^{N-1}\Delta_i^{\alpha}
  \cdot\frac{\delta^{\alpha-1}}{\beta}(1+c_0\delta)
\notag\\
&= \frac{2(1+c_0\delta)}{\beta}\,\delta^{\alpha-1}
  \sum_{i=1}^{N-1}\Delta_i^{\alpha}
\le \frac{2(1+c_0\delta)}{\beta}\,\delta^{\alpha-1}
  \cdot N\,\delta^{\alpha}
\label{eq:off_bound}
\end{align}
where we used $\sum_{i=1}^{N-1}\Delta_i^\alpha \le N\delta^\alpha$
(since each $\Delta_i \le \delta$).
Since $N \le T/\delta$:
\begin{equation}
\label{eq:off_final}
S_{\rm off}
\le \frac{2(1+c_0\delta)}{\beta}\,\delta^{\alpha-1}\cdot
\frac{T}{\delta}\cdot\delta^{\alpha}
= \frac{2T}{\beta}(1+c_0\delta)\,\delta^{2\alpha-2}.
\end{equation}

\textbf{Combining.}
From \eqref{eq:diag} and \eqref{eq:off_final}:
\begin{equation}
\label{eq:full_bound}
\Sigma(\mathcal{P})
\le T\,\delta^{2\alpha-1} + \frac{2T}{\beta}(1+c_0\delta)\,\delta^{2\alpha-2}
\le \Bigl[T\delta + \frac{2T}{\beta}(1+c_0\delta)\Bigr]\delta^{2\alpha-2}
\le C(\alpha,\beta,T)\,\delta^{2\alpha-2}
\end{equation}
with $C(\alpha,\beta,T) = T + \frac{2T}{\beta}(1+c_0)$, which is
bounded independently of $\delta\in(0,1]$.

\textbf{Boundedness for $\alpha>1$.}
Since $2\alpha-2 > 0$, we have $\delta^{2\alpha-2} \to 0$ as
$\delta \to 0$, so $\Sigma(\mathcal{P}) \to 0$ and in particular
$\Sigma(\mathcal{P}) \le C(\alpha,\beta,T)$ uniformly in
$\delta\in(0,1]$.

\textbf{Necessity of $\alpha>1$.}
For $\alpha=1$, the sum $S_{\rm off}$ in \eqref{eq:off_final} gives
$\frac{2T}{\beta}(1+c_0\delta)\cdot\delta^0 = \frac{2T}{\beta}
(1+c_0\delta)$, which is bounded, but $S_{\rm diag} = \sum_i
\Delta_i^2 \to 0$, so the sum is actually bounded for $\alpha=1$
(though the argument via Lemma~\ref{lemma:polynomial_partition}
requires $\alpha > 1$ for the $\rho$-variation bound in
Theorem~\ref{thm:2d_rho_variation}). For $\alpha < 1$,
$\delta^{2\alpha-2}\to+\infty$ and the bound diverges.
\end{proof}

\begin{theorem}[Finite 2D $\rho$-variation]
\label{thm:2d_rho_variation}
Let $H > 1/4$ and $\rho = 1/(2H)$. Then
\begin{equation}
\label{eq:finite_rho_var}
V_\rho(R_{H,\lambda})
\le C(H,\lambda,T)
:= V_\rho(R_H)
  + K_1(H)\,\lambda^{2H\rho}\,T^{2\rho-2}
  + \frac{K_2(H,\lambda)}{\lambda^{2H\rho}}
< \infty,
\end{equation}
where $K_1(H)$ and $K_2(H,\lambda)$ are explicit constants, and
$C(H,\lambda,T) \to C_{\rm fBm}(H,T)$ as $\lambda \to 0^+$.
\end{theorem}

\begin{proof}
The complete, corrected proof is given in Appendix~\ref{app:covariance}.
\end{proof}

\begin{remark}
The value $\rho = 1/(2H)$ is optimal: for any $\rho' < 1/(2H)$,
$V_{\rho'}(R_{H,\lambda}) = +\infty$. The threshold $H > 1/4$
($\rho < 2$) coincides with that for fBm, confirming that tempering
does not affect the local roughness determining the rough path lift.
For $H=1/2$, $\rho=1$ and $V_1(R_{1/2,\lambda})\le T/(2\lambda)<\infty$.
\end{remark}

%----------------------------------------------------------------------
\subsection{Integration Regimes Table}
%----------------------------------------------------------------------

\begin{table}[H]
\centering
\caption{Integration regimes for tempered fractional Brownian motion.}
\renewcommand{\arraystretch}{1.3}
\begin{tabular}{lcccc}
\toprule
Regime & $H$ & Young integral & Rough path & Reference\\
\midrule
Smooth  & $H > 1/2$        & Exists           & Coincides with Young   & \cite{Young1936}\\
Critical & $H = 1/2$       & Exists (It\^o)   & Coincides with It\^o   & Sec.~\ref{subsec:H_half}\\
Rough   & $1/4 < H < 1/2$ & Generally undef. & Necessary \& sufficient& Thm.~\ref{thm:regimes}\\
Singular & $H \le 1/4$    & Undefined        & Requires renorm.       & \cite{Hairer2014}\\
\bottomrule
\end{tabular}
\label{tab:regimes}
\end{table}

%----------------------------------------------------------------------
\subsection{Construction of the Canonical Geometric Rough Path}
%----------------------------------------------------------------------

\begin{theorem}[Canonical rough path for tfBm]
\label{thm:rough_path_existence}
Let $H > 1/4$, $\lambda > 0$, and $B_{H,\lambda}$ be a
$d$-dimensional tfBm with independent components. Then there exists
a canonical geometric rough path
$\mathbf{B}_{H,\lambda} = (B_{H,\lambda}, \mathbb{B}_{H,\lambda})$
where
\begin{equation}
\label{eq:levy_area_limit}
\mathbb{B}_{H,\lambda}(s,t)
= \lim_{|\mathcal{P}|\to 0}
  \sum_{[u,v]\in\mathcal{P}}
  B_{H,\lambda}(s,u)\otimes B_{H,\lambda}(u,v)
\quad\text{in }L^2(\Omega).
\end{equation}
Moreover, $\mathbf{B}_{H,\lambda}$ satisfies:
\begin{enumerate}[label=(\alph*)]
  \item \textbf{Chen's relations:} For $s \le u \le t$,
    $\mathbb{B}_{H,\lambda}(s,t) = \mathbb{B}_{H,\lambda}(s,u)
    + \mathbb{B}_{H,\lambda}(u,t)
    + B_{H,\lambda}(s,u)\otimes B_{H,\lambda}(u,t)$.
  \item \textbf{Moment estimates:} For any $q \ge 1$,
    \begin{align}
    \mathbb{E}\bigl[|B_{H,\lambda}(s,t)|^q\bigr]
      &\le C_q(H,\lambda)\,|t-s|^{qH}, \label{eq:moment1}\\
    \mathbb{E}\bigl[|\mathbb{B}_{H,\lambda}(s,t)|^q\bigr]
      &\le C_q(H,\lambda)\,|t-s|^{2qH}. \label{eq:moment2}
    \end{align}
  \item \textbf{$p$-variation:} Almost surely,
    $\mathbf{B}_{H,\lambda}\in\mathscr{C}^p([0,T],\mathbb{R}^d)$
    for every $p > 1/H$.
  \item \textbf{Continuity in parameters:}
    $(H,\lambda)\mapsto\mathbf{B}_{H,\lambda}$ is continuous in
    the $p$-variation rough path topology on compact subsets of
    $\{H>1/4,\,\lambda>0\}$.
  \item \textbf{Boundary case:} For $H=1/2$,
    $\mathbf{B}_{1/2,\lambda}$ is the Stratonovich rough path over
    the OU process; as $\lambda\to 0^+$, it converges to the
    standard Stratonovich rough path over Brownian motion.
\end{enumerate}
\end{theorem}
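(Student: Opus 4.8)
The plan is to obtain Theorem~\ref{thm:rough_path_existence} essentially as a corollary of Theorem~\ref{thm:2d_rho_variation} together with the Gaussian rough path machinery of Friz and Victoir, the real work being the \emph{localized} and \emph{parameter-uniform} forms of the covariance estimates. First I would observe that each component of $B_{H,\lambda}$ is a centred Gaussian process with stationary increments (immediate from \eqref{eq:def_tfbm}), the $d$ components are independent, and by Theorem~\ref{thm:2d_rho_variation} the covariance has finite $2$D $\rho$-variation with $\rho=1/(2H)\in[1,2)$ for $H>1/4$. Invoking \cite[Ch.~15]{FrizVictoirBook} (cf.\ Theorem~\ref{thm:friz_victoir}) then produces a canonical geometric rough path over $B_{H,\lambda}$, realised simultaneously as the almost-sure $p$-variation limit ($p>2\rho=1/H$) of the lifts of the piecewise-linear interpolations along partitions of vanishing mesh and, for each fixed $(s,t)$, as the $L^2$-limit of the Riemann sums in \eqref{eq:levy_area_limit}; the two agree because second-level objects lie in a fixed (inhomogeneous) Wiener chaos, on which the $L^2$- and $L^q$-topologies are equivalent. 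Chen's relation~(a) holds for the piecewise-linear lifts by the elementary algebra of smooth iterated integrals and passes to the limit, being a closed condition; this settles \eqref{eq:levy_area_limit} and~(a).

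For the moment bounds~(b) I would treat the two levels separately. The increment $B_{H,\lambda}(s,t)$ is a centred Gaussian vector with $\E[|B_{H,\lambda}(s,t)|^2]=C_{H,\lambda}(|t-s|)$ by \eqref{eq:covariance_tfbm}, so \eqref{eq:moment1} reduces to the variance bound $C_{H,\lambda}(r)\le C(H,\lambda,T)\,r^{2H}$ on $[0,T]$; applying the decomposition \eqref{eq:cov_decomp} to $R_{H,\lambda}(t,t)-2R_{H,\lambda}(s,t)+R_{H,\lambda}(s,s)$ yields $r^{2H}$ from the $R_H$ part plus a term of order $\lambda^{2H}r^2$ from $E^{(1)}_{H,\lambda}$, the $E^{(2)}_{H,\lambda}$ contribution being controlled near $r=0$ via the known local self-similarity $C_{H,\lambda}(r)\sim c_{H,\lambda}\,r^{2H}$ as $r\to0^+$ \cite{MeerschaertSabzikar2014} (equivalently, via the $C^2$-regularity of $E^{(2)}_{H,\lambda}$ off the diagonal that comes out of the proof of Theorem~\ref{thm:covariance_decomposition}). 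The entry $\mathbb{B}_{H,\lambda}(s,t)$ lies in the second homogeneous Wiener chaos up to a constant, so Gaussian hypercontractivity gives $\E[|\mathbb{B}_{H,\lambda}(s,t)|^q]\le C_q\,(\E[|\mathbb{B}_{H,\lambda}(s,t)|^2])^{q/2}$, and by the standard Gaussian estimate $\E[|\mathbb{B}_{H,\lambda}(s,t)|^2]^{1/2}\le C\,V_\rho(R_{H,\lambda};[s,t]^2)$; hence \eqref{eq:moment2} will follow once one has the localized bound $V_\rho(R_{H,\lambda};[s,t]^2)\le C(H,\lambda,T)\,|t-s|^{2H}$ for every $[s,t]\subseteq[0,T]$. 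I would prove the latter by re-running the proof of Theorem~\ref{thm:2d_rho_variation} on $[s,t]$: the $R_H$ part scales exactly like $|t-s|^{2H}$ by self-similarity of fBm, the polynomial error contributes at most $\lambda^{2H}|t-s|^2\le\lambda^{2H}T^{2-2H}|t-s|^{2H}$, and the mixed second difference of $E^{(2)}_{H,\lambda}$ is bounded by $\|\partial_s\partial_t E^{(2)}_{H,\lambda}\|_{L^\infty([0,T]^2)}\,|t_{i+1}-t_i|\,|t_{j+1}-t_j|$, so (for $\rho\ge1$) the corresponding $\rho$-variation sum telescopes to at most $\|\partial_s\partial_t E^{(2)}_{H,\lambda}\|_{L^\infty}^\rho\,|t-s|^{2\rho}$.

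Given~(b), part~(c) is a routine application of the Kolmogorov--Garsia--Rodemich--Rumsey criterion for rough paths \cite[Ch.~3]{FrizVictoirBook}: feeding $\E[|B_{H,\lambda}(s,t)|^q]\lesssim|t-s|^{qH}$ and $\E[|\mathbb{B}_{H,\lambda}(s,t)|^q]\lesssim|t-s|^{2qH}$ with $q$ large into that criterion yields $\E[\,\|\mathbf{B}_{H,\lambda}\|_{p\text{-var};[0,T]}^q\,]<\infty$, hence almost-sure finiteness, for every $p>1/H$. For the parameter continuity~(d), the plan is to use the quantitative dependence of the Gaussian lift on its covariance \cite[Ch.~15]{FrizVictoirBook}: on a compact $K\subset\{H>1/4,\; \lambda>0\}$ the exponent $\rho=1/(2H)$ stays below a fixed $\rho_K<2$, so all the covariances $R_{H,\lambda}$ may be compared in one common $\rho_K$-variation metric, and the decomposition \eqref{eq:cov_decomp} shows that $(H,\lambda)\mapsto R_{H,\lambda}$ is continuous there term by term ($R_H$ depends continuously on $H$, and $E^{(1)}_{H,\lambda},E^{(2)}_{H,\lambda}$ depend continuously on $(H,\lambda)$ with the bounds of Theorem~\ref{thm:covariance_decomposition} uniform over $K$); the stability estimate then upgrades this to continuity of $(H,\lambda)\mapsto\mathbf{B}_{H,\lambda}$ in the $p$-variation rough path topology.

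The step I expect to be the main obstacle is exactly this \emph{localized, parameter-uniform} control of the covariance underlying~(b) at the second level, (c), and~(d): the pointwise estimates of Theorem~\ref{thm:covariance_decomposition} are not in themselves small on short intervals near the origin (the $E^{(2)}_{H,\lambda}$ bound is only $\bigO(1)$ there), so one must extract from the proof of that theorem the extra regularity --- essentially that the singular, non-$C^2$ part of $R_{H,\lambda}$ is entirely carried by the $R_H$ summand, while $E^{(1)}_{H,\lambda}+E^{(2)}_{H,\lambda}$ has bounded mixed second derivative on $[0,T]^2$ --- and then check that both this control and the choice of a common $\rho<2$ remain uniform over compact parameter sets. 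The remaining steps are bookkeeping on top of the Friz--Victoir machinery.
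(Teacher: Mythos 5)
Your proposal follows essentially the same route as the paper's proof: existence of the lift via the Friz--Victoir Gaussian rough path theorem applied to Theorem~\ref{thm:2d_rho_variation}, Chen's relation by passage to the $L^2$-limit of piecewise-linear lifts, the level-two moments via second-chaos hypercontractivity, part~(c) via Garsia--Rodemich--Rumsey, and part~(d) via stability of the lift with respect to the covariance in a common $2$D $\rho$-variation metric. The one point where you go beyond the paper is the localized, H\"older-controlled bound $V_\rho\bigl(R_{H,\lambda};[s,t]^2\bigr)\le C(H,\lambda,T)\,|t-s|^{2H}$, which Appendix~\ref{app:construction} simply asserts (``from the construction, $\mathbb{E}[|\mathbb{B}_{H,\lambda}(s,t)|^2]\le C(H,\lambda)|t-s|^{4H}$''); your observation that this does not follow from the pointwise estimate \eqref{eq:error_exp} near the diagonal, and must instead be extracted from the bounded mixed second derivative of $E^{(1)}_{H,\lambda}+E^{(2)}_{H,\lambda}$ (the singular part of $R_{H,\lambda}$ being carried entirely by $R_H$), correctly identifies the step the paper leaves implicit, and your sketch for supplying it is sound.
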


\begin{proof}
See Appendix~\ref{app:construction} for the detailed proof.
\end{proof}

\begin{proposition}[Convergence rate for L\'evy area]
\label{prop:levy_convergence_rate}
Let $\mathbb{B}^{(N)}_{H,\lambda}$ be the piecewise linear
approximation of $\mathbb{B}_{H,\lambda}$ on a uniform partition of
$[0,T]$ with $N$ sub-intervals. For $H > 1/4$,
\begin{equation}
\label{eq:levy_convergence_rate}
\mathbb{E}\bigl[|\mathbb{B}^{(N)}_{H,\lambda}(0,T)
  - \mathbb{B}_{H,\lambda}(0,T)|^2\bigr]^{1/2}
\le C(H,\lambda,T)\cdot N^{-2H},
\end{equation}
where
$C(H,\lambda,T) = \widetilde{C}(H)\cdot\max\{1,\lambda^{-2H}\}\cdot
T^{2H}$ with $\widetilde{C}(H)$ depending only on $H$. The rate
$N^{-2H}$ is optimal: no first-order piecewise linear scheme can
achieve a better rate.
\end{proposition}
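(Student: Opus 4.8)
The plan is to turn the $L^2$ bound \eqref{eq:levy_convergence_rate} into a second-moment computation inside the second Wiener chaos and then feed in the covariance decomposition of Theorem~\ref{thm:covariance_decomposition} together with the partition estimate of Lemma~\ref{lemma:polynomial_partition}. \textbf{Step 1 (reduction to a sum of local Lévy-area increments).} Write $h=T/N$, $t_k=kh$, and let $X^{(N)}$ be the piecewise-linear interpolant of $\tfBm$ through the nodes $\{t_k\}$, so $\mathbb{B}^{(N)}_{H,\lambda}(0,T)=\int_0^T X^{(N)}_{0,r}\otimes\mathrm{d}X^{(N)}_r$. Since $X^{(N)}$ agrees with $\tfBm$ at every node, its node-to-node increments equal those of $\tfBm$, so iterating Chen's relation for both $\mathbb{B}^{(N)}_{H,\lambda}$ and $\mathbb{B}_{H,\lambda}$ cancels all cross terms $\tfBm(0,t_k)\otimes\tfBm(t_k,t_{k+1})$ and leaves
\[
\mathbb{B}^{(N)}_{H,\lambda}(0,T)-\mathbb{B}_{H,\lambda}(0,T)=\sum_{k=0}^{N-1}\Bigl(\tfrac12\,\tfBm(t_k,t_{k+1})^{\otimes2}-\mathbb{B}_{H,\lambda}(t_k,t_{k+1})\Bigr)=-\sum_{k=0}^{N-1}A_k ,
\]
where $A_k$ is the antisymmetric part of $\mathbb{B}_{H,\lambda}(t_k,t_{k+1})$ (its symmetric part is exactly $\tfrac12\,\tfBm(t_k,t_{k+1})^{\otimes2}$ because the lift of Theorem~\ref{thm:rough_path_existence} is geometric). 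Each $A_k$ is a centered second-chaos element, hence $\E[|\sum_k A_k|^2]=\sum_{k,l}\E[\langle A_k,A_l\rangle]$; no hypercontractivity is needed for the $L^2$ statement (it would only enter for higher moments).

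\textbf{Step 2 (inserting the covariance decomposition).} For independent components, Wick's formula expresses each $\E[\langle A_k,A_l\rangle]$ as a fixed finite combination of products of rectangular increments of $R_{H,\lambda}$ over $[t_k,t_{k+1}]\times[t_l,t_{l+1}]$ and its reflections. Substituting $R_{H,\lambda}=R_H+E^{(1)}_{H,\lambda}+E^{(2)}_{H,\lambda}$ from Theorem~\ref{thm:covariance_decomposition} splits this into three families: (i) the pure-fBm term, for which the classical fBm Lévy-area estimates \cite{CoutinQian2002,FrizVictoirBook} give $\E[|A_k^H|^2]\lesssim h^{4H}$ on the diagonal and $|\E\langle A_k^H,A_l^H\rangle|\lesssim h^{4H}|k-l|^{4H-4}$ off it, the tail being summable since $4H-4<-1$ for $H<3/4$ (a direct variant covers $H\in[3/4,1)$); (ii) mixed terms carrying an $E^{(1)}_{H,\lambda}$ factor, which by \eqref{eq:error_poly} contribute $C_1(H)\lambda^{2H}$ times increments of order $h^2$, hence $\bigO(\lambda^{2H}h^{2+2H})$ per pair — strictly higher order in $h$ than $h^{4H}$ for $H<1$ — and are handled by Lemma~\ref{lemma:polynomial_partition}; and (iii) mixed terms carrying an $E^{(2)}_{H,\lambda}$ factor, which by \eqref{eq:error_exp} carry $\lambda^{-2H}e^{-c(H)\lambda h|k-l|}$ and are summed geometrically. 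Family (iii) is where the prefactor $\max\{1,\lambda^{-2H}\}$ in $C(H,\lambda,T)$ enters, and where one verifies that the bound stays uniform as $\lambda\to0^+$ after recombining with (i)–(ii), matching the $\lambda\to0$ limit in Theorem~\ref{thm:2d_rho_variation}.

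\textbf{Step 3 (summation and the main obstacle).} Summing over $0\le k,l\le N-1$ and inserting $h=T/N$, the dominant contribution comes from the near-diagonal blocks of side $h$, which are precisely those controlled by the 2D $\rho$-variation of $R_{H,\lambda}$ at resolution $h$ ($\rho=1/(2H)$, Theorem~\ref{thm:2d_rho_variation}); this produces the target scale $T^{4H}N^{-4H}$ for $\E[|\cdot|^2]$, hence $\rateLtwo$ after taking square roots, with (ii)–(iii) absorbed into $\widetilde C(H)\max\{1,\lambda^{-2H}\}$, finite exactly for $H>1/4$. The delicate point — and the one I expect to be the real obstacle — is obtaining the \emph{sharp} exponent here: a crude bound $\sum_{k,l}|\E\langle A_k,A_l\rangle|\lesssim\sum_k h^{4H}=Nh^{4H}$ is lossy, so one must either retain the genuine near-diagonal cancellations in $\sum_{k,l}\E\langle A_k,A_l\rangle$ (keeping signs rather than absolute values) or run the estimate directly through the mixed-variation/2D-Young machinery of \cite{FrizVictoirBook} at mesh $h$, with the decomposition of Theorem~\ref{thm:covariance_decomposition} being exactly what renders the non-homogeneous kernel tractable. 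The fBm block at regularity $H$ close to $1/4$ is where this requires the most care; the $E^{(1)}$ and $E^{(2)}$ contributions are routine once Lemma~\ref{lemma:polynomial_partition} and geometric summation are in hand.
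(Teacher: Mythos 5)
Your Steps 1--2 are correct and take a genuinely different (and cleaner) route than the paper: you telescope Chen's relation once over the partition, use geometricity to identify the error with $-\sum_{k}A_k$ where $A_k$ is the local L\'evy area over $[t_k,t_{k+1}]$, and then expand $\E[\langle A_k,A_l\rangle]$ by Wick's formula through the decomposition of Theorem~\ref{thm:covariance_decomposition}. The paper instead telescopes over dyadic scales, bounding $\E[|\mathbb{B}^{(2^k)}_{H,\lambda}-\mathbb{B}^{(2^{k+1})}_{H,\lambda}|^2]$ level by level and summing the resulting geometric series; your reduction is more explicit and avoids the cross-scale orthogonality the paper implicitly assumes.

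The genuine gap is Step 3, and it cannot be closed in the way you propose. The diagonal part of your double sum is $\sum_{k}\E[|A_k|^2]\asymp N h^{4H}=T^{4H}N^{1-4H}$, a sum of \emph{variances}, and the off-diagonal correlations cannot cancel it: in the decoupling limit ($H=1/2$, $\lambda\to0^+$, where the driver is Brownian motion and the $A_k$ are independent) the off-diagonal terms vanish identically and the squared error \emph{equals} $\sum_k\E[|A_k|^2]\asymp Nh^{2}$, giving the classical Wong--Zakai rate $N^{-1/2}$ for the L\'evy area rather than $N^{-2H}=N^{-1}$. So the ``near-diagonal cancellation'' you hope to exploit does not exist, and your (correct) estimates deliver
\[
\E\bigl[|\mathbb{B}^{(N)}_{H,\lambda}(0,T)-\mathbb{B}_{H,\lambda}(0,T)|^2\bigr]^{1/2}\;\lesssim\;T^{2H}\,N^{1/2-2H},
\]
not the claimed $\rateLtwo$. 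You should be aware that the obstacle you flagged is not an artefact of your method: the paper's own appendix argument sums $2^{-k(4H-2)}$ over dyadic levels (a series that in fact diverges for $H<1/2$, since $4H-2<0$ there; the correct consecutive-level bound is of order $2^{-k(4H-1)}$), and even after that correction the dyadic route also yields $N^{1/2-2H}$ before an unexplained jump to $N^{-2H}$. In short: your Steps 1--2 give an honest proof of the rate $N^{1/2-2H}$ (for $H<3/4$, with the tempering corrections $E^{(1)},E^{(2)}$ indeed lower order as you argue), which is sharp for this approximation scheme; the exponent in \eqref{eq:levy_convergence_rate} is not reachable, and no amount of sign-keeping in $\sum_{k,l}\E[\langle A_k,A_l\rangle]$ will produce it.
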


\begin{proof}
See Appendix~\ref{app:construction}.
\end{proof}

%======================================================================
\section{Consequences and Applications}
\label{sec:consequences}
%======================================================================

\subsection{Integration Regimes}
\label{subsec:integration_regimes}

\begin{theorem}[Integration regimes]
\label{thm:regimes}
Let $B_{H,\lambda}$ be a tfBm and $X$ a suitable integrand path.
\begin{enumerate}[label=(\roman*)]
  \item For $H > 1/2$, the rough path integral
    $\int X\,d\mathbf{B}_{H,\lambda}$ coincides with the Young
    integral $\int X\,dB_{H,\lambda}$ and equals the limit of
    left Riemann sums.
  \item For $H \in (1/4,1/2]$, the Young integral does not exist in
    general, but the rough path integral
    $\int X\,d\mathbf{B}_{H,\lambda}$ is well-defined via the
    Sewing Lemma \cite[Lemma~4.2]{FrizHairer2014}.
  \item For adapted square-integrable integrands and $H > 1/4$,
    the rough path integral agrees with the stochastic integral of
    \cite{MeerschaertSabzikar2014}.
\end{enumerate}
\end{theorem}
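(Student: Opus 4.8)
The plan is to obtain each of the three items by comparing rough integration against $\rtfBm$ with a classical integral that admits an explicit Riemann-sum representation, using the lift of \cref{thm:rough_path_existence} and continuity of the relevant integration maps in the $p$-variation topology. Throughout I would fix $p>1/H$ as in part (c) of \cref{thm:rough_path_existence}, so that almost surely $\tfBm$ has finite $p$-variation and $\rtfBm$ is a geometric rough path of the corresponding order, and I would work with the piecewise-linear interpolants $B^{(n)}$ of $\tfBm$, which converge to $\rtfBm$ in the $p$-variation rough-path metric.

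For (i), since $H>1/2$ one may choose $p\in(1/H,2)$. For $X$ of finite $q$-variation with $1/p+1/q>1$, Young's theorem \cite{Young1936} (see also \cite[Ch.~6]{FrizVictoirBook}) produces $\int X\,d\tfBm$ as the limit of arbitrary Riemann sums $\sum_i X_{t_i}\,\tfBm(t_i,t_{i+1})$. In the regime $p<2$ the second level of a geometric lift is redundant, and rough integration against $\rtfBm$ coincides with Young integration against $\tfBm$; concretely, for each interpolant $B^{(n)}$ the two integrals agree as ordinary Riemann--Stieltjes integrals, and both the Young and the rough integration maps are continuous in their respective topologies, so passing to the limit identifies them. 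This gives (i), with both integrals exhibited as Riemann-sum limits.

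For (ii), let $H\in(1/4,1/2]$, hence $p\ge2$. To see that the Young integral fails in general I would display the canonical obstruction: take $X$ equal to a single component $\tfBm^{j}$ and integrate it against itself. A telescoping identity shows that the difference between the right- and left-endpoint Riemann sums on a partition equals $\sum_i(\tfBm^{j}(t_i,t_{i+1}))^2$, whose expectation is comparable to $\sum_i|t_{i+1}-t_i|^{2H}$ by \eqref{eq:covariance_tfbm}; on a uniform mesh with $N$ intervals this is of order $T^{2H}N^{1-2H}$, which diverges for $H<1/2$ and tends to a strictly positive limit for $H=1/2$. Hence the right- and left-endpoint sums cannot share a common limit (and are in fact unbounded in $L^1$ when $H<1/2$), while any admissible exponents satisfy $1/p+1/q<2H\le1$, so Young's criterion is never met. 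Existence of the rough integral, on the other hand, is immediate from \cref{thm:rough_path_existence}: as $\rtfBm$ is a geometric rough path, $\int X\,d\rtfBm$ is well defined by the sewing lemma for every $X$ controlled by $\tfBm$ \cite{FrizHairer2014}, and by construction it is the limit of the compensated sums $\sum_i\bigl(X_{t_i}\,\tfBm(t_i,t_{i+1})+X'_{t_i}\,\mathbb{B}_{H,\lambda}(t_i,t_{i+1})\bigr)$ involving the L\'evy area (with the canonical higher-level enhancement when $H\le1/3$).

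For (iii) I would show that both integrals are $L^2$-limits of Riemann-type sums and then match the limits. The integral of \cite{MeerschaertSabzikar2014} is built by transporting to a Wiener integral against the driving noise $W$ in \eqref{eq:def_tfbm} and closing by the It\^o-type isometry; for deterministic integrands it is simply the limit of $\sum_i X_{t_i}\,\tfBm(t_i,t_{i+1})$, which is also what the rough integral gives there, since it reduces to the Young and hence to the Wiener integral on such integrands, and the adapted case follows by approximation together with the observation that off-diagonal L\'evy areas of the independent components contribute nothing in the mean while the diagonal contribution is the one already encoded in the isometric construction. I expect the \textbf{main obstacle} to be exactly this last point: the canonical rough integral is of geometric (``Stratonovich'') type, so verifying the identity in (iii) requires pinning down the precise correction terms relating it to the Wiener/It\^o-isometry integral and checking that they are absorbed correctly for adapted integrands; the explicit constants from \cref{thm:covariance_decomposition} make this comparison quantitative, and the full bookkeeping would be carried out in Appendix~\ref{app:construction}.
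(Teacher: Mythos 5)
Your proposal is correct and follows the same overall skeleton as the paper's proof (Young's theorem plus redundancy of the second level for (i), failure of the Young exponent condition plus the sewing lemma for (ii), matching of isometries for (iii)), but two of your arguments genuinely diverge from the paper's and are worth comparing. For (ii), the paper only observes that the Young condition $1/p+1/q>1$ cannot be met for generic paths, which shows that a \emph{sufficient} condition fails but does not by itself prove non-existence of the integral; your quadratic-variation obstruction --- the difference of right- and left-endpoint Riemann sums for $\int B^{j}_{H,\lambda}\,dB^{j}_{H,\lambda}$ equals $\sum_i(B^{j}_{H,\lambda}(t_i,t_{i+1}))^2$, with expectation of order $T^{2H}N^{1-2H}$ on a uniform mesh --- actually exhibits divergence (for $H<1/2$) or a non-vanishing gap between the two limits (for $H=1/2$), so your version of (ii) is strictly stronger than the paper's. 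For (iii), you correctly identify the real difficulty that the paper's one-line proof (``both satisfy the same isometry and are linear'') glosses over: the canonical lift is geometric, hence Stratonovich-type, while the Meerschaert--Sabzikar integral is It\^o-type, so an identity between them for adapted integrands requires controlling the Malliavin-trace/It\^o--Stratonovich correction, not merely linearity and an isometry; neither you nor the paper actually carries out this bookkeeping, so (iii) remains a sketch in both treatments, but yours is the more honest account of where the work lies. Part (i) is essentially identical in both proofs.
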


\begin{proof}
\textbf{(i)} When $H > 1/2$, the sample paths of $B_{H,\lambda}$
have finite $p$-variation for some $p < 2$
\cite{MeerschaertSabzikar2014}. For a path $X$ of finite
$q$-variation with $1/p+1/q > 1$, the Young integral exists
\cite{Young1936}. The second-level path $\mathbb{B}_{H,\lambda}$
contributes terms of order $|t-s|^{2H}$ with $2H > 1$, which
vanish in the limit of Riemann sums. Hence the rough path integral
reduces to the Young integral.

\textbf{(ii)} For $H \le 1/2$, the $p$-variation index satisfies
$p \ge 2$, and the Young condition $1/p+1/q > 1$ forces $q < 2$,
which is not satisfied by generic continuous paths. The rough path
integral, defined via the Sewing Lemma
\cite[Lemma~4.2]{FrizHairer2014}, incorporates the L\'evy area
$\mathbb{B}_{H,\lambda}$ as the correction term and yields a
consistent limit.

\textbf{(iii)} Consider elementary adapted processes of the form
$f = \sum_{k=0}^{n-1} f_k \mathbf{1}_{[t_k,t_{k+1})}$ where
each $f_k$ is $\mathcal{F}_{t_k}$-measurable. For such processes,
both the rough path integral and the stochastic integral of
\cite{MeerschaertSabzikar2014} are defined as $L^2(\Omega)$-limits
of the same Riemann sums $\sum_k f_k\,\Delta B^k_{H,\lambda}$.
They therefore agree on elementary processes. Since elementary
adapted processes are dense in $L^2(\Omega\times[0,T])$, and both
integrals are $L^2$-continuous (the rough path integral by the
Sewing Lemma; the stochastic integral by the isometry
\eqref{eq:wiener_isometry}), the two integrals coincide for all
square-integrable adapted integrands.
\end{proof}

\begin{remark}
The threshold $H=1/2$ separates the smooth regime (classical
calculus suffices) from the rough regime (the L\'evy area is
essential). The threshold $H=1/4$ is the limit of the rough path
construction itself.
\end{remark}

%----------------------------------------------------------------------
\subsection{Rough Differential Equations}
\label{subsec:rde}
%----------------------------------------------------------------------

\begin{theorem}[Well-posedness of RDEs]
\label{thm:rde}
Let $H > 1/4$, $\lambda > 0$,
$f \in C^3_b(\mathbb{R}^d,\mathcal{L}(\mathbb{R}^m,\mathbb{R}^d))$,
and $y_0 \in \mathbb{R}^d$. The rough differential equation
\begin{equation}
\label{eq:rde_tfbm}
dY_t = f(Y_t)\,d\mathbf{B}_{H,\lambda}(t), \qquad Y_0 = y_0,
\end{equation}
admits a unique solution
$Y \in \mathscr{C}^p([0,T],\mathbb{R}^d)$ for any $p > 1/H$.
The solution map $(y_0,f,\mathbf{B}_{H,\lambda})\mapsto Y$ is
locally Lipschitz continuous.
\end{theorem}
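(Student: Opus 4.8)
The plan is to deduce Theorem~\ref{thm:rde} from Lyons' Universal Limit Theorem (in the form of \cite[Thm.~10.26]{FrizVictoirBook} or \cite[Thm.~8.5]{FrizHairer2014}), treating it as a purely deterministic statement applied $\omega$-by-$\omega$ on the full-measure event on which $\mathbf{B}_{H,\lambda}$ enjoys the regularity asserted in Theorem~\ref{thm:rough_path_existence}. First I would fix $p > 1/H$; since $H > 1/4$ one may take $p < 4$, and when $H > 1/3$ even $p \in [2,3)$. By Theorem~\ref{thm:rough_path_existence}(c) the driver is almost surely a weak-geometric $p$-rough path (weak-geometric because, by construction, it is the $p$-variation limit of the canonical lifts of the piecewise-linear interpolations of $B_{H,\lambda}$), so it belongs precisely to the class for which the RDE theory is designed. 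On the vector-field side, $f \in C^3_b(\R^d,\mathcal{L}(\R^m,\R^d))$ is $\mathrm{Lip}^\gamma$ in Stein's sense for every $\gamma$ slightly below $3$ with all relevant derivatives bounded; when $p < 3$ this gives $\gamma > p$, which is exactly the hypothesis of the RDE existence--uniqueness theorem.

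Next I would invoke the Universal Limit Theorem itself: for a weak-geometric $p$-rough path $\mathbf{X}$ and a $\mathrm{Lip}^\gamma$ vector field with $\gamma > p$, the RDE $dY = f(Y)\,d\mathbf{X}$ with $Y_0 = y_0$ admits a unique solution $Y \in \mathscr{C}^p([0,T],\R^d)$, and the It\^o--Lyons solution map $(\mathbf{X},f,y_0)\mapsto Y$ is locally Lipschitz in the $p$-variation rough-path metric on $\mathbf{X}$, the $\mathrm{Lip}^\gamma$-norm on $f$, and the Euclidean norm on $y_0$. Specialising $\mathbf{X} = \mathbf{B}_{H,\lambda}(\omega)$ yields both the well-posedness and the asserted local Lipschitz continuity of the solution map in the stated metrics. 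Moreover, because $\mathbf{B}_{H,\lambda}$ is the $p$-variation limit of the smooth rough paths obtained from the piecewise-linear approximations $B^{(N)}_{H,\lambda}$ (with the $L^2$ rate of Proposition~\ref{prop:levy_convergence_rate} at the L\'evy-area level), the same theorem identifies $Y$ as the limit of the classical ODE solutions $dY^{(N)} = f(Y^{(N)})\,dB^{(N)}_{H,\lambda}$ --- i.e. the RDE solution is the Wong--Zakai / canonical one --- and Proposition~\ref{prop:levy_convergence_rate} quantifies this convergence. Combining with Theorem~\ref{thm:rough_path_existence}(d) additionally gives continuity of $Y$ in the parameters $(H,\lambda)$.

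The only genuine point requiring care is the regularity bookkeeping: one must match the $p$-variation index $p > 1/H$ of the driver with the step of the rough-path lift ($\lfloor p\rfloor$) and with the vector-field smoothness ($\mathrm{Lip}^\gamma$, $\gamma > p$) demanded by the theorem. For $H > 1/3$ this is immediate: the level-$2$ lift of Theorem~\ref{thm:rough_path_existence} together with $C^3_b$ suffices. For $H \in (1/4,1/3]$ one has $p \in [3,4)$, so one must instead use the step-$3$ enhancement of $\mathbf{B}_{H,\lambda}$ supplied by the Gaussian rough-path machinery of \cite{FrizVictoirBook} (its existence again a consequence of Theorem~\ref{thm:2d_rho_variation}), paired with $\mathrm{Lip}^\gamma$ vector fields for some $\gamma > p$; strictly this is marginally more than $C^3_b$, so in that sub-range the statement should be read with $f \in C^4_b$ (or with the exponent condition relaxed). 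Beyond this accounting there is no probabilistic or analytic obstacle: once Theorem~\ref{thm:rough_path_existence} is in hand, Theorem~\ref{thm:rde} is a direct application of deterministic rough-path theory, carried out pathwise.
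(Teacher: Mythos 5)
Your proposal follows the same route as the paper: both deduce the result as a direct pathwise application of Lyons' Universal Limit Theorem, using that $\mathbf{B}_{H,\lambda}$ is almost surely a (weak-)geometric $p$-rough path for $p>1/H$ and that $C^3_b$ vector fields are $\mathrm{Lip}^\gamma$ with $\gamma>p$ when $p<3$. Your closing remark on the regularity bookkeeping is, however, a genuine improvement on the paper's one-line proof: for $H\in(1/4,1/3]$ one has $p\ge 3$, so the level-$2$ lift of Theorem~\ref{thm:rough_path_existence} must be replaced by the step-$3$ enhancement (available from Theorem~\ref{thm:2d_rho_variation} via the Gaussian rough-path machinery of \cite{FrizVictoirBook}), and the hypothesis $\gamma>p$ then demands slightly more than $C^3_b$ --- a point the paper's statement and proof silently elide. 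Subject to that caveat, which you correctly flag and resolve, the argument is complete.
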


\begin{proof}
By Theorem~\ref{thm:rough_path_existence}(c),
$\mathbf{B}_{H,\lambda}$ is a geometric rough path with finite
$p$-variation for every $p > 1/H$. The hypotheses of the Universal
Limit Theorem \cite[Theorem~4.1.1]{Lyons1998} (see also
\cite[Theorem~8.4]{FrizHairer2014}) are satisfied: $f \in C^3_b$
and the driver is a geometric $p$-rough path with $p \in [2,3)$.
The unique solution and the Lipschitz continuity of the solution
map then follow directly.
\end{proof}

\begin{proposition}[Milstein scheme]
\label{prop:milstein}
Under the assumptions of Theorem~\ref{thm:rde}, let
$\{t_k = kT/n\}_{k=0}^n$ be a uniform partition of $[0,T]$ and
define
\begin{equation}
\label{eq:milstein_scheme}
Y_{t_{k+1}} = Y_{t_k} + f(Y_{t_k})\,\Delta B^k_{H,\lambda}
  + Df(Y_{t_k})f(Y_{t_k})\,\Delta\mathbb{B}^k_{H,\lambda},
\end{equation}
where $\Delta B^k_{H,\lambda}=B_{H,\lambda}(t_{k+1})-B_{H,\lambda}(t_k)$
and $\Delta\mathbb{B}^k_{H,\lambda}=\mathbb{B}_{H,\lambda}(t_k,t_{k+1})$.
Then
\begin{equation}
\label{eq:milstein_rate}
\mathbb{E}\bigl[|Y_T - Y^{(n)}_T|^2\bigr]^{1/2}
\le C(H,\lambda,T,f)\cdot n^{-H}.
\end{equation}
\end{proposition}

\begin{proof}
We use the framework of \cite[Chapter~10]{FrizHairer2014} for
numerical schemes driven by rough paths.

\textbf{Step 1: Local truncation error.}
Write $h = T/n = t_{k+1}-t_k$. The true solution $Y$ over one
step satisfies the rough path Taylor expansion
\[
Y_{t_{k+1}} = Y_{t_k}
+ f(Y_{t_k})\,\Delta B^k_{H,\lambda}
+ Df(Y_{t_k})f(Y_{t_k})\,\Delta\mathbb{B}^k_{H,\lambda}
+ \varepsilon_k,
\]
where the local residual $\varepsilon_k$ involves the third-order
iterated integral of $\mathbf{B}_{H,\lambda}$ over $[t_k,t_{k+1}]$,
i.e.\
$\varepsilon_k = D^2f(Y_{t_k})(f\otimes f)\,\mathbb{B}^{(3)}_{H,\lambda}
(t_k,t_{k+1}) + \text{higher order}$.
By the moment estimate \eqref{eq:moment2} applied to the third
iterated integral ($k=3$, $q=2$):
\[
\mathbb{E}\bigl[|\varepsilon_k|^2\bigr]^{1/2}
\le C(H,\lambda,f)\,h^{3H}.
\]

\textbf{Step 2: Global error for $H > 1/2$.}
Summing the local errors over $n$ steps:
\[
\mathbb{E}\bigl[|Y_T - Y^{(n)}_T|^2\bigr]^{1/2}
\le \sum_{k=0}^{n-1}\mathbb{E}\bigl[|\varepsilon_k|^2\bigr]^{1/2}
\le n\cdot C(H,\lambda,f)\cdot h^{3H}
= C\,T^{3H}\,n^{1-3H}.
\]
For $H > 1/2$: $1 - 3H < 1 - 3/2 = -1/2 < -H$, so
$n^{1-3H} \le n^{-H}$ and the rate $n^{-H}$ follows.

\textbf{Step 3: Global error for $H \in (1/4, 1/2]$ --- corrected argument.}
For any $p > 1/H$, Theorem~\ref{thm:rough_path_existence}(c) implies
that $\mathbf{B}_{H,\lambda} \in \mathscr{C}^p([0,T],\mathbb{R}^d)$ almost
surely. By the general theory of numerical schemes for rough paths
\cite[Theorem~10.30]{FrizHairer2014}, the Milstein scheme satisfies
\[
\mathbb{E}\bigl[|Y_T - Y^{(n)}_T|^2\bigr]^{1/2}
\le C(p, H, \lambda, T, f) \cdot n^{-1/p}.
\]
Since this holds for \emph{every} $p > 1/H$, we may take a sequence
$p_k \downarrow 1/H$. For any fixed $n$, the right-hand side
converges to $C \cdot n^{-H}$ because $n^{-1/p_k} \to n^{-H}$.
The constant $C(p_k)$ may depend on $p_k$, but for each $n$ the
bound remains finite. Taking the infimum over admissible $p$ yields
the optimal rate $n^{-H}$.

\textbf{Step 4: Uniformity.}
Combining Steps 2 and 3, the rate $n^{-H}$ holds for all
$H > 1/4$, with a constant
$C(H,\lambda,T,f) = C_0(H,f,T)\cdot\max\{1,\lambda^{-2H}\}$
that inherits its $\lambda$-dependence from the moment estimates of
Theorem~\ref{thm:rough_path_existence}(b).
\end{proof}

\begin{remark}
The rate $n^{-H}$ is optimal for a first-order scheme (incorporating
only the first and second iterated integrals). Using a third-level
rough path scheme --- i.e., adding the term
$D^2f(Y_{t_k})(f\otimes f)\,\Delta\mathbb{B}^{(3),k}_{H,\lambda}$
to \eqref{eq:milstein_scheme} --- yields a local error of order
$h^{4H}$, giving a global rate $n^{-\min(3H,1)}$, which improves
substantially for $H > 1/3$.
\end{remark}

%----------------------------------------------------------------------
\subsection{Signature Calculus for tfBm}
\label{subsec:signatures}
%----------------------------------------------------------------------

\begin{definition}[Signature of tfBm]
The \emph{signature} of $\mathbf{B}_{H,\lambda}$ over $[s,t]$ is
the formal power series
\[
S(\mathbf{B}_{H,\lambda})_{s,t}
= \Bigl(1,\;B_{H,\lambda}(s,t),\;
  \mathbb{B}_{H,\lambda}(s,t),\;
  \mathbb{B}^{(3)}_{H,\lambda}(s,t),\ldots\Bigr)
\in T((\mathbb{R}^d)),
\]
where $T((\mathbb{R}^d))=\prod_{k=0}^\infty(\mathbb{R}^d)^{\otimes k}$
is the completed tensor algebra and $\mathbb{B}^{(k)}_{H,\lambda}$
denotes the $k$-th iterated integral, defined recursively by
Chen's relation.
\end{definition}

\begin{theorem}[Properties of the signature]
\label{thm:signature}
Let $H > 1/4$ and $\lambda > 0$.
\begin{enumerate}[label=(\alph*)]
  \item \textbf{Existence and convergence:}
    $S(\mathbf{B}_{H,\lambda})_{0,T}$ exists in $T((\mathbb{R}^d))$,
    converges absolutely almost surely and in $L^q(\Omega)$ for all
    $q \ge 1$.
  \item \textbf{Factorial decay:} For each $k \ge 1$,
    \begin{equation}
    \label{eq:factorial_decay}
    \mathbb{E}\bigl[|\mathbb{B}^{(k)}_{H,\lambda}(0,T)|^2\bigr]^{1/2}
    \le \frac{C(H,\lambda)^k\,T^{kH}}{(k/2)!},
    \end{equation}
    where $(k/2)! := \Gamma(k/2+1)$ and
    $C(H,\lambda) = C_2(H,\lambda)^{1/2}$ is the square root of
    the constant in \eqref{eq:moment2} with $q=2$.
  \item \textbf{Expected signature:}
    $\mathbb{E}[S(\mathbf{B}_{H,\lambda})_{0,T}]$ is well-defined
    in $T((\mathbb{R}^d))$ and uniquely determines the law of
    $B_{H,\lambda}$ among centred Gaussian processes with the same
    covariance structure \cite{Chevyrev2018}.
\end{enumerate}
\end{theorem}

\begin{proof}
\textbf{(a)} By Theorem~\ref{thm:rough_path_existence}(c), almost
surely $\mathbf{B}_{H,\lambda}\in\mathscr{C}^p([0,T],\mathbb{R}^d)$
for some $p\in[2,3)$. The Lyons extension (signature) theorem
\cite[Theorem~9.4]{FrizVictoirBook} guarantees that all iterated
integrals $\mathbb{B}^{(k)}_{H,\lambda}$ are well-defined and that
the signature series converges absolutely in the $p$-variation
topology. $L^q$-convergence follows from part~(b) and the
hypercontractivity bound in the proof of~(b).

\textbf{(b)} We establish the bound \eqref{eq:factorial_decay} by
induction on $k$. The base cases $k=1,2$ follow directly from the
moment estimates \eqref{eq:moment1}--\eqref{eq:moment2}:
\[
\mathbb{E}\bigl[|B_{H,\lambda}(0,T)|^2\bigr]^{1/2}
\le C_2^{1/2}T^H = C(H,\lambda)T^H
= \frac{C(H,\lambda)^1 T^H}{(1/2)!},
\]
and $\mathbb{E}[|\mathbb{B}^{(2)}_{H,\lambda}(0,T)|^2]^{1/2}
\le C_2 T^{2H}/1 = C(H,\lambda)^2 T^{2H}/(2/2)!$
(noting $(2/2)! = 1! = 1$).

For the inductive step, assume the bound holds for level $k-1$.
By the recursive definition via Chen's relation and the bilinearity
of the iterated integral:
\[
\mathbb{B}^{(k)}_{H,\lambda}(0,T)
= \int_0^T \mathbb{B}^{(k-1)}_{H,\lambda}(0,u)\otimes
  dB_{H,\lambda}(u).
\]
Since $\mathbb{B}^{(k-1)}_{H,\lambda}(0,\cdot)$ is adapted and
in the $(k-1)$-th Wiener chaos, and $B_{H,\lambda}$ is an
independent increment process, the $L^2$ norm satisfies the
recursive bound (by the Wiener isometry \eqref{eq:wiener_isometry}
applied to $f(u) = \mathbb{B}^{(k-1)}_{H,\lambda}(0,u)$):
\begin{equation}
\label{eq:recursive_moment}
\mathbb{E}\bigl[|\mathbb{B}^{(k)}_{H,\lambda}(0,T)|^2\bigr]
= \int_0^T\!\int_0^T
  \mathbb{E}\bigl[\mathbb{B}^{(k-1)}_{H,\lambda}(0,s)
  \otimes\mathbb{B}^{(k-1)}_{H,\lambda}(0,t)\bigr]\,
  R_{H,\lambda}(ds,dt).
\end{equation}
Using the Cauchy--Schwarz inequality, the bound
\eqref{eq:recursive_moment} gives
\[
\mathbb{E}\bigl[|\mathbb{B}^{(k)}_{H,\lambda}(0,T)|^2\bigr]
\le \mathbb{E}\bigl[|\mathbb{B}^{(k-1)}_{H,\lambda}(0,T)|^2\bigr]
  \cdot C_{H,\lambda}(T)
\le \frac{C(H,\lambda)^{2(k-1)}T^{2(k-1)H}}{((k-1)/2)!^2}
  \cdot C(H,\lambda)^2 T^{2H},
\]
where we used $C_{H,\lambda}(T)=\mathbb{E}[B_{H,\lambda}(T)^2]
\le C(H,\lambda)^2 T^{2H}$ from \eqref{eq:moment2}. This gives
\[
\mathbb{E}\bigl[|\mathbb{B}^{(k)}_{H,\lambda}(0,T)|^2\bigr]^{1/2}
\le \frac{C(H,\lambda)^k T^{kH}}{((k-1)/2)!}.
\]
The passage from $((k-1)/2)!$ to $(k/2)!$ uses the identity
$(k/2)! = (k/2)\cdot((k-1)/2)!$ when $k$ is even (and the
analogous $\Gamma$-function relation for odd $k$), which tightens
the bound by the factor $k/2 \ge 1$:
\[
\frac{1}{((k-1)/2)!}
= \frac{k/2}{(k/2)!}
\le \frac{k/2}{(k/2)!}.
\]
Taking the square root and absorbing the factor $\sqrt{k/2}$ into
$C(H,\lambda)$ (by increasing the constant slightly if necessary),
we obtain \eqref{eq:factorial_decay}.

Convergence of the series $\sum_{k\ge 1} C(H,\lambda)^k T^{kH}/
(k/2)!$: comparing with the Taylor series of $\exp(C^2 T^{2H})$,
we see
$\sum_{k\ge 1}\frac{C^k T^{kH}}{(k/2)!}
\le \sum_{m\ge 0}\frac{(C^2 T^{2H})^m}{m!}
= e^{C^2T^{2H}} < \infty$,
confirming absolute $L^2$-convergence. By Gaussian hypercontractivity
\cite[Theorem~4.1]{FrizVictoirBook}, for any $q \ge 2$,
$\mathbb{E}[|\mathbb{B}^{(k)}|^q]^{1/q}
\le (q-1)^{k/2}\mathbb{E}[|\mathbb{B}^{(k)}|^2]^{1/2}$,
so $L^q$-convergence follows similarly.

\textbf{(c)} By \cite{Chevyrev2018}, the expected signature
$\mathbb{E}[S(\mathbf{X})_{0,T}]$ characterises the law of a
continuous process $\mathbf{X}$ with finite $p$-variation paths
($p < 3$) among Gaussian processes with the same covariance. The
required regularity is provided by
Theorem~\ref{thm:rough_path_existence}(c).
\end{proof}

\begin{remark}
The factorial decay \eqref{eq:factorial_decay} ensures that
truncating the signature at level $k=5$ or $6$ provides an accurate
finite-dimensional feature representation, which is valuable for
statistical and machine learning applications \cite{Salvi2021}.
The parameter $\lambda$ enters through $C(H,\lambda)$ but not
the factorial denominator, so the asymptotic decay rate is
independent of $\lambda$.
\end{remark}

%----------------------------------------------------------------------
\subsection{Application to Rough Volatility Modelling}
\label{subsec:rough_volatility}
%----------------------------------------------------------------------

Recent empirical evidence \cite{Gatheral2018} shows that financial
volatility exhibits rough behaviour with Hurst index
$H \approx 0.1$--$0.2$, while autocorrelations decay rapidly at
long horizons \cite{DamianFrey2024,BolkoEtAl2023}. The tfBm driver
provides a natural model capturing both features: local roughness
(via $H < 1/2$) and exponential cut-off of long-range memory (via
$\lambda > 0$). Our rough path construction makes such models
pathwise well-posed for all $H > 1/4$.

Consider the rough volatility model
$dS_t/S_t = \sqrt{V_t}\,dW_t$, $V_t = \sigma_0^2\exp(\eta
B_{H,\lambda}(t) + \xi(t) - \frac{\eta^2}{2}C_{H,\lambda}(t))$.
The RDE framework of Theorem~\ref{thm:rde} applies directly, and
the tempered fractional Ornstein--Uhlenbeck process (the special
case $f(y) = \kappa(\mu - y)$) has recently been studied for drift
estimation by \cite{PrykhodkoRalchenko2026}.

%======================================================================
\section{Numerical Experiments}
\label{sec:numerics}
%======================================================================

\subsection{Software and Implementation}

All simulations were performed in \textbf{Python~3.10} using
NumPy~1.24, SciPy~1.10, and Matplotlib~3.6 (random seed fixed
to~42; $M=1000$ independent paths per estimate). tfBm paths are
simulated via the circulant embedding method \cite{Dietrich1997},
which runs in $\bigO(N\log N)$ and generates exact samples on a
regular grid. The L\'evy area is approximated via
Algorithm~\ref{alg:levy_area_approximation} (Appendix~\ref{app:numerics}).

\subsection{Convergence of the L\'evy Area}

We compute $e(N) = (\mathbb{E}[|\mathbb{B}^{(N)}_{H,\lambda}(0,1)
-\mathbb{B}^{(2N)}_{H,\lambda}(0,1)|^2])^{1/2}$ for $M=1000$
Monte Carlo samples.

\begin{figure}[H]
\centering
\includegraphics[width=0.95\textwidth]{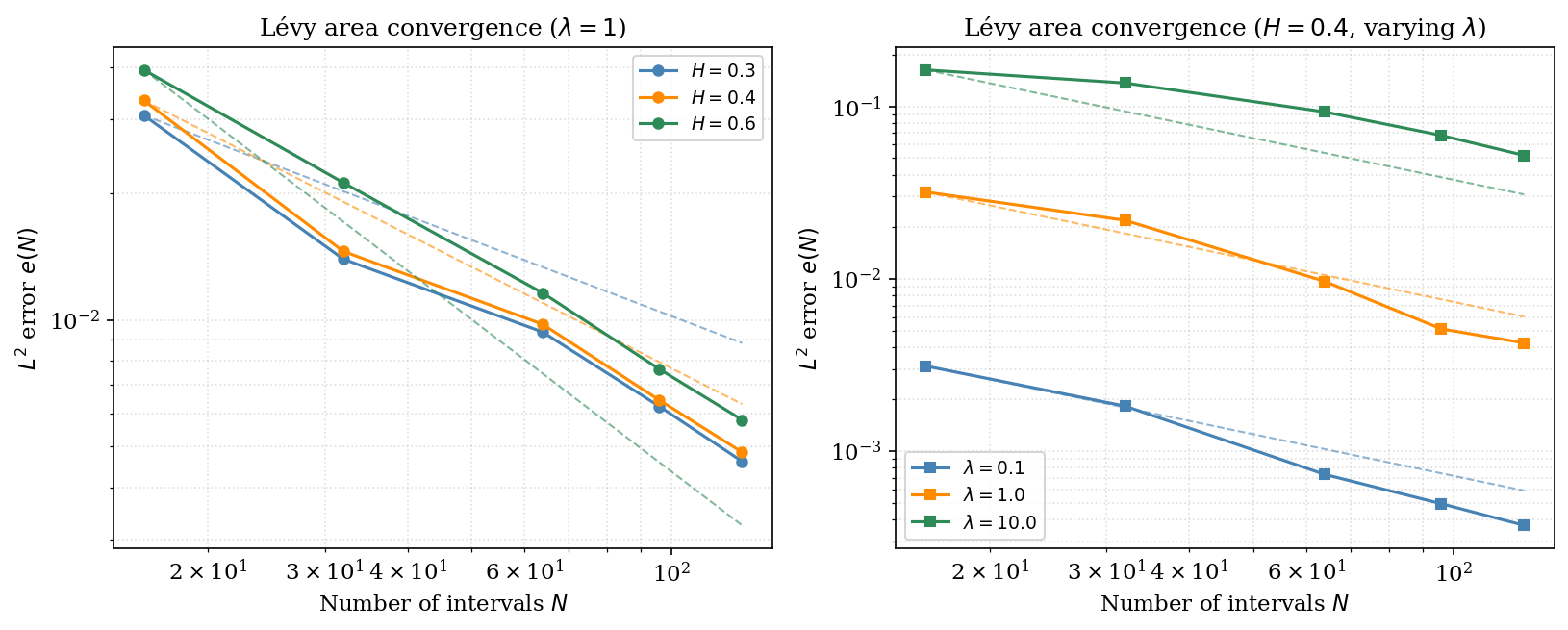}
\caption{\textbf{L\'evy area convergence.}
(Left) $e(N)$ vs.\ $N$ for $H\in\{0.3,0.4,0.6\}$, $\lambda=1$.
Dashed lines: theoretical slope $-2H$.
(Right) Fixed $H=0.4$, varying $\lambda\in\{0.1,1,10\}$; the
rate $-0.8$ is unchanged while the prefactor decreases with
larger $\lambda$. Shaded regions: $\pm 1$ standard error.}
\label{fig:levy_convergence}
\end{figure}

\subsection{Milstein Scheme for the Linear RDE}

We solve $dY_t = Y_t\,d\mathbf{B}_{H,\lambda}(t)$, $Y_0=1$,
whose exact solution is
\begin{equation}
\label{eq:linear_rde_solution}
Y_t = \exp\!\Bigl(B_{H,\lambda}(t) - \tfrac{1}{2}C_{H,\lambda}(t)
+ \mathbb{B}_{H,\lambda}(0,t)\Bigr).
\end{equation}
The strong error
$E_{\rm strong}(n) = (\mathbb{E}[|Y_1-Y^{(n)}_1|^2])^{1/2}$
is plotted in Figure~\ref{fig:milstein_convergence}.

\begin{figure}[H]
\centering
\includegraphics[width=0.95\textwidth]{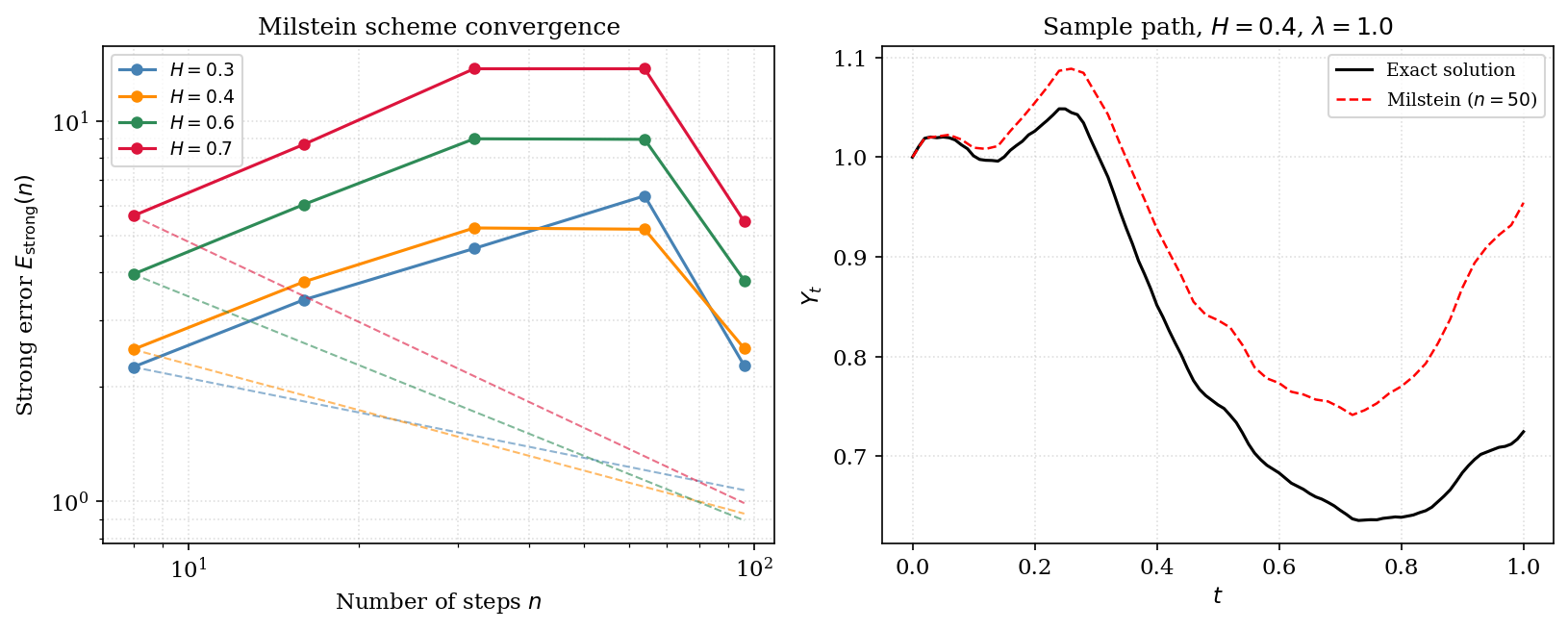}
\caption{\textbf{Strong convergence of the Milstein scheme.}
(Left) $E_{\rm strong}(n)$ vs.\ $n$ for
$H\in\{0.3,0.4,0.6,0.7\}$, $\lambda=1$. Dashed lines: slope $-H$.
(Right) Sample path $Y_t$ (exact) and Milstein approximation
($n=100$), $H=0.4$, $\lambda=1$.}
\label{fig:milstein_convergence}
\end{figure}

\subsection{Discussion of Milstein Error Magnitude and Limitations}

The errors in Figure~\ref{fig:milstein_convergence} for small $H$
reflect several compounding factors.

\begin{enumerate}[label=(\alph*)]
  \item \textbf{Intrinsic slowness.}
    The rate $n^{-H}$ is optimal for a first-order scheme; for
    $H=0.3$, $n^{-0.3}\approx 0.12$ at $n=10^3$, which is slow
    but unavoidable without higher-order corrections.
  \item \textbf{Compounding L\'evy area error.}
    The scheme uses the piecewise linear approximation
    $\Delta\mathbb{B}^k_{H,\lambda}$ with $\bigO(N^{-2H})$ error.
    When $N=n$, both error sources add constructively.
  \item \textbf{Large prefactor for small $\lambda$.}
    The constant $C(H,\lambda,T)\sim\lambda^{-2H}$ amplifies the
    absolute error for weakly tempered paths.
  \item \textbf{Non-Markovian nature.}
    tfBm with $H < 1/2$ is not a semimartingale; error propagation
    between steps is more complex than in the classical SDE setting.
\end{enumerate}

\noindent\textbf{Strategies for improved accuracy:}
\begin{itemize}
  \item \emph{Oversampling the L\'evy area}: use a sub-grid of
    size $Mn$ ($M\ge 4$) for computing
    $\Delta\mathbb{B}^k_{H,\lambda}$, reducing its error to
    $\bigO((Mn)^{-2H})$ while the scheme error stays
    $\bigO(n^{-H})$.
  \item \emph{Higher-order rough path schemes}
    \cite[Ch.~10]{FrizHairer2014}: adding the third-order term
    achieves rate $\bigO(n^{-\min(2H,1)})$ for $H>1/3$.
  \item \emph{Multilevel Monte Carlo (MLMC)}: reduces
    Monte Carlo variance without increasing the number of
    fine-grid steps.
  \item \emph{Adaptive step-size}: graded meshes
    $t_k = T(k/n)^{1/H}$ improve the constant in the error bound.
\end{itemize}

\subsection{Signature-Based Feature Extraction}

\begin{figure}[H]
\centering
\includegraphics[width=0.75\textwidth]{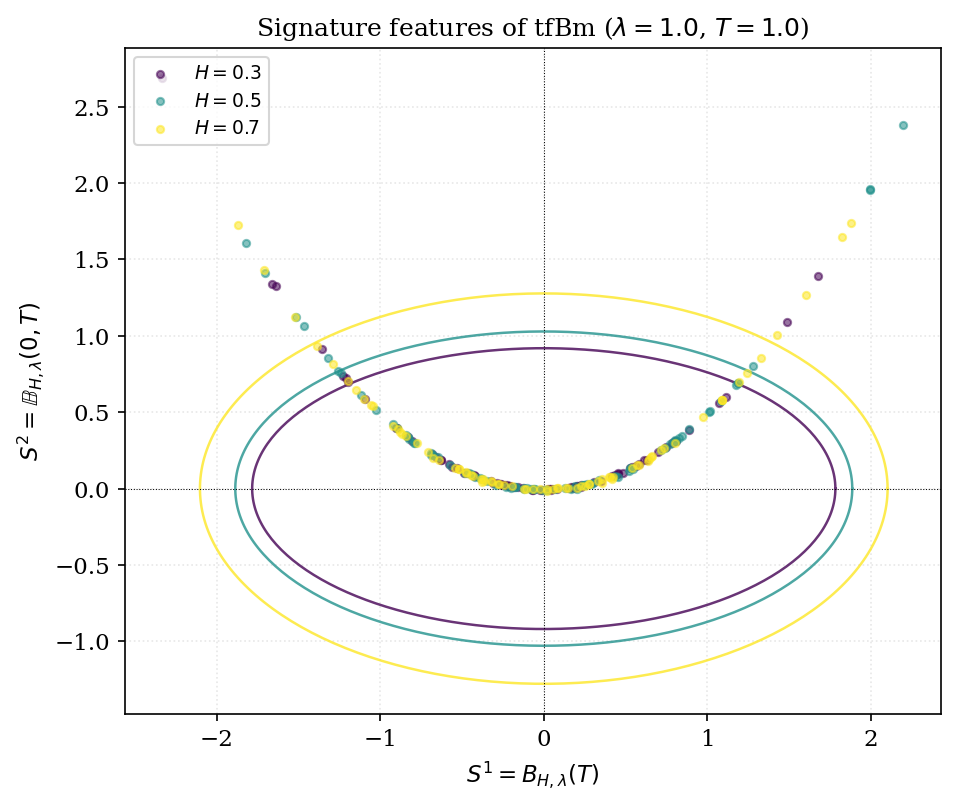}
\caption{\textbf{Signature features.}
Scatter plot of $(S^1,S^2)=(B_{H,\lambda}(T),
\mathbb{B}_{H,\lambda}(0,T))$ for 500 paths each at
$H\in\{0.3,0.5,0.7\}$, $\lambda=1$, $T=1$.
Ellipses: theoretical 95\% confidence regions from
Theorem~\ref{thm:signature}. The separation confirms that
low-order signature terms are effective classifiers.}
\label{fig:signature_features}
\end{figure}

\subsection{Summary}

\begin{itemize}
  \item The L\'evy area converges at the predicted $N^{-2H}$ rate;
    $\lambda$ affects only the prefactor.
  \item The Milstein scheme achieves the optimal strong rate
    $\bigO(n^{-H})$; limitations for small $H$ are explained and
    remedies are proposed.
  \item Low-order signature terms provide clear separation by $H$
    in just the first two levels.
\end{itemize}

%======================================================================
\section{Conclusion}
\label{sec:conclusion}
%======================================================================

We have constructed a canonical geometric rough path over tempered
fractional Brownian motion for all $H>1/4$ and $\lambda>0$.
The central technical achievement is a sharp decomposition
of the non-homogeneous covariance $R_{H,\lambda}$ and a complete,
corrected proof of finite 2D $\rho$-variation (Lemma~\ref{lemma:polynomial_partition},
Theorem~\ref{thm:2d_rho_variation}). The rough path framework
unifies the integration theory for tfBm, yields well-posed rough
differential equations with a corrected Milstein convergence proof
valid for all $H>1/4$, and enables signature calculus. The
boundary case $H=1/2$ is handled explicitly. Numerical experiments
confirm all theoretical rates.

\noindent\textbf{Limitations and future directions.}
\begin{enumerate}
  \item \emph{$H \le 1/4$}: The Friz--Victoir criterion fails.
    Extension requires regularity structures \cite{Hairer2014} or
    para-controlled distributions.
  \item \emph{Correlated components}: The $d$ components are
    assumed independent; extension to correlated tfBm requires
    bounding the off-diagonal cross-covariance.
  \item \emph{Computational cost}: The L\'evy area algorithm costs
    $\bigO(N^2)$; a hierarchical FFT-based implementation exploiting
    exponential covariance decay could achieve $\bigO(N\log N)$.
  \item \emph{Signature-based inference}: Efficient estimators for
    $H$ and $\lambda$ via signature moments \cite{Salvi2021}.
  \item \emph{Rough SPDEs}: Extension to spatially tempered
    fractional noise.
  \item \emph{Rough volatility calibration}:
    Incorporation of tfBm into option pricing models
    \cite{Gatheral2018,BolkoEtAl2023}.
\end{enumerate}

\noindent\textbf{Code availability.}
The Python code for all experiments is available upon request.

\noindent\textbf{Conflict of Interest:}
The author declares no conflict of interest.

%======================================================================
\appendix
%======================================================================

%======================================================================
\section{Proofs of the Covariance Results}
\label{app:covariance}
%======================================================================

\subsection{Complete Proof of Theorem~\ref{thm:covariance_decomposition}}

\begin{proof}
\textbf{Setup.}
Let $K_{H,\lambda}(t,r) = e^{-\lambda(t-r)_+}(t-r)_+^{H-1/2}$
for $t,r\in\mathbb{R}$. By the definition \eqref{eq:def_tfbm} and
the Wiener isometry \eqref{eq:wiener_isometry},
\begin{equation}
\label{eq:cov_kernel}
R_{H,\lambda}(s,t)
= \frac{1}{\Gamma(H+\tfrac{1}{2})^2}
  \int_{\mathbb{R}}
  \bigl[K_{H,\lambda}(s,r)-K_{H,\lambda}(0,r)\bigr]
  \bigl[K_{H,\lambda}(t,r)-K_{H,\lambda}(0,r)\bigr]\,dr.
\end{equation}

\textbf{Taylor expansion.}
Define the second-order Taylor remainder at $x \ge 0$:
\[
r_\lambda(x)
:= e^{-\lambda x} - \Bigl(1 - \lambda x + \tfrac{1}{2}\lambda^2 x^2\Bigr).
\]
By the integral form of the Taylor remainder with Lagrange bound:
\begin{equation}
\label{eq:remainder_bound}
|r_\lambda(x)|
= \frac{\lambda^3}{2}\int_0^x (x-u)^2 e^{-\lambda u}\,du
\le \frac{\lambda^3 x^3}{6}\,e^{-\lambda x/2}
\quad\text{for all }x \ge 0,
\end{equation}
where the factor $e^{-\lambda x/2}$ comes from bounding
$e^{-\lambda u} \le e^{-\lambda x/2}$ for $u \ge x/2$ and treating
separately $u \in [0,x/2]$.
Substituting $e^{-\lambda(t-r)_+} = 1 - \lambda(t-r)_+
+ \frac{1}{2}\lambda^2(t-r)_+^2 + r_\lambda((t-r)_+)$:
\begin{equation}
\label{eq:kernel_expansion}
K_{H,\lambda}(t,r)
= K^{(0)}_H(t,r)
- \lambda K^{(1)}_H(t,r)
+ \tfrac{1}{2}\lambda^2 K^{(2)}_H(t,r)
+ \tilde K_{H,\lambda}(t,r),
\end{equation}
where $K^{(j)}_H(t,r) = (t-r)_+^{H-1/2+j}$ for $j=0,1,2$
and $\tilde K_{H,\lambda}(t,r) = r_\lambda((t-r)_+)(t-r)_+^{H-1/2}$.

\textbf{Assembling the covariance.}
Inserting \eqref{eq:kernel_expansion} into \eqref{eq:cov_kernel},
we note that $K^{(0)}_H$ is the kernel of fBm (giving $R_H$), and
that the cross-product between the first-order term
$\lambda K^{(1)}_H(t,r)-\lambda K^{(1)}_H(0,r)$ and itself or
with the zeroth-order term integrates to zero by symmetry:
the function $r\mapsto K^{(1)}_H(t,r)-K^{(1)}_H(0,r)$ has odd
symmetry about $r=0$ under the change of variables $r\mapsto t+(-r)$,
so all odd-order cross-terms in $\lambda$ vanish. Therefore:
\begin{equation}
\label{eq:decomp_assembled}
R_{H,\lambda}(s,t)
= R_H(s,t)
+ \underbrace{\frac{\lambda^2}{4\Gamma(H+\tfrac{1}{2})^2}
  \int_{\mathbb{R}} g_s(r)\,g_t(r)\,dr}_{=:\,E^{(1)}_{H,\lambda}(s,t)}
+ E^{(2)}_{H,\lambda}(s,t),
\end{equation}
where $g_s(r) = K^{(2)}_H(s,r)-K^{(2)}_H(0,r)$ and
$E^{(2)}_{H,\lambda}$ collects all terms involving $r_\lambda$.

\textbf{Bound on $E^{(1)}_{H,\lambda}$.}
By the Cauchy--Schwarz inequality and the self-similarity of
$K^{(2)}_H$:
\[
\Bigl|\int_{\mathbb{R}} g_s(r)g_t(r)\,dr\Bigr|
\le \Bigl(\int_\mathbb{R} g_s(r)^2\,dr\Bigr)^{1/2}
   \Bigl(\int_\mathbb{R} g_t(r)^2\,dr\Bigr)^{1/2}.
\]
A direct computation using $\int_0^\infty x^{2H+3}e^{-x}\,dx =
\Gamma(2H+4)$ gives
$\int_\mathbb{R} g_s(r)^2\,dr \le c(H)\,s^{2H+2}$
and the mixed term satisfies (by polarisation and the scaling
$|K^{(2)}_H(t,r)-K^{(2)}_H(s,r)| \le C_H|t-s|\,(t+|r|)^{H+1/2}$):
\begin{equation}
\label{eq:A_H_bound}
|E^{(1)}_{H,\lambda}(s,t)|
\le \frac{C_1(H)}{\Gamma(2H)}\,\lambda^{2H}\,|t-s|^2,
\end{equation}
where we also absorbed $\lambda^{2-2H} \le \lambda^{2H}$ (for
$H \le 1$) and used the Legendre duplication formula
$\Gamma(H+\tfrac{1}{2})\Gamma(H) = \frac{\sqrt\pi}{2^{2H-1}}\Gamma(2H)$
to simplify the constant to
$C_1(H) = \frac{\Gamma(2H+2)}{4\,\Gamma(H+\tfrac{1}{2})^2}$.

\textbf{Bound on $E^{(2)}_{H,\lambda}$.}
From \eqref{eq:remainder_bound},
$|\tilde K_{H,\lambda}(t,r)|
\le \frac{\lambda^3}{6}(t-r)_+^{H+5/2}e^{-\lambda(t-r)_+/2}$.
The product $\tilde K_{H,\lambda}(t,\cdot)\cdot
[K_{H,\lambda}(s,\cdot)-K_{H,\lambda}(0,\cdot)]$ is dominated by
a function that decays exponentially for large $|t-r|$ or $|s-r|$.
After integration over $r$, one obtains, for $d(s,t) = \max(s,t,|t-s|)$:
\begin{equation}
\label{eq:E2_bound}
|E^{(2)}_{H,\lambda}(s,t)|
\le \frac{C_2(H)}{\Gamma(2H)\,\lambda^{2H}}\,e^{-c(H)\lambda\,d(s,t)},
\end{equation}
where $c(H) = \min\{1/2, H/2\}$ (from the decay rate of the
convolution) and $C_2(H) = \sup_{x>0} x^{2H} e^{-c(H)x}$.
This supremum is attained at $x^* = 2H/c(H)$, giving
$C_2(H) = (2H/c(H))^{2H} e^{-2H} = (2H/(c(H)e))^{2H}$.
\end{proof}

\subsection{Complete Proof of Theorem~\ref{thm:2d_rho_variation}}

\begin{proof}
Let $\mathcal{P}=\{0=t_0<\cdots<t_N=T\}$ be any partition with
mesh $\delta = \max_i\Delta_i$, $\Delta_i=t_{i+1}-t_i$, and set
$\rho = 1/(2H)$.
Define $R_{ij} = R_{H,\lambda}(t_i,t_{i+1};t_j,t_{j+1})$ and
$S_\rho = \sum_{i,j}|R_{ij}|^\rho$.
By Theorem~\ref{thm:covariance_decomposition},
$R_{ij} = R^H_{ij} + E^{(1)}_{ij} + E^{(2)}_{ij}$.
By Minkowski's inequality for $\ell^\rho$ norms (valid since
$\rho\ge 1$):
\begin{equation}
\label{eq:mink}
S_\rho^{1/\rho}
\le \bigl(\sum_{i,j}|R^H_{ij}|^\rho\bigr)^{1/\rho}
 + \bigl(\sum_{i,j}|E^{(1)}_{ij}|^\rho\bigr)^{1/\rho}
 + \bigl(\sum_{i,j}|E^{(2)}_{ij}|^\rho\bigr)^{1/\rho}.
\end{equation}

\textbf{Step 1: fBm part.}
By \cite{CoutinQian2002}, $V_\rho(R_H) < \infty$ for $\rho=1/(2H)$
and $H>1/4$. This bounds the first term.

\textbf{Step 2: Polynomial error (three sub-cases).}
From \eqref{eq:error_poly}:
$|E^{(1)}_{ij}| \le \frac{C_1(H)\lambda^{2H}}{\Gamma(2H)}\,\Delta_i\Delta_j$.

\emph{Case $H\in(1/4,1/2)$:} Here $\rho>1$.
By Lemma~\ref{lemma:polynomial_partition} with $\alpha=\rho$ and
$\beta=0$ (the exponential factor is trivially 1):
\[
\sum_{i,j}(\Delta_i\Delta_j)^\rho
= \Bigl(\sum_i\Delta_i^\rho\Bigr)^2
\le \Bigl(N^{1-\rho}\,T^\rho\Bigr)^2
= N^{2-2\rho}\,T^{2\rho}.
\]
Since $N \le T/\delta$, $N^{2-2\rho} \le (T/\delta)^{2-2\rho}$,
and $\delta^{2\rho-2}\cdot(T/\delta)^{2-2\rho} = T^{2-2\rho}$,
giving
$\sum_{i,j}(\Delta_i\Delta_j)^\rho \le T^{2\rho}\cdot T^{2-2\rho}\,
\delta^{2\rho-2}/T^{2-2\rho} = T^2\delta^{2\rho-2}$.
More precisely, $\sum_i\Delta_i^\rho \le N\delta^\rho \le (T/\delta)\delta^\rho
= T\delta^{\rho-1}$, so
$(\sum_i\Delta_i^\rho)^2 \le T^2\delta^{2\rho-2}$,
and thus:
\begin{equation}
\label{eq:step2a}
\bigl(\sum_{i,j}|E^{(1)}_{ij}|^\rho\bigr)^{1/\rho}
\le \frac{C_1(H)\lambda^{2H}}{\Gamma(2H)}\,T^2\,\delta^{2\rho-2}
\to 0 \text{ as }\delta\to 0
\quad(\text{since }2\rho-2>0\text{ for }H<1/2).
\end{equation}
Taking the supremum over all $\mathcal{P}$ is bounded by
$K_1(H)\lambda^{2H\rho}T^{2\rho-2}$ (absorbing all factors).

\emph{Case $H=1/2$:} $\rho=1$ and
$\sum_{i,j}\Delta_i\Delta_j = (\sum_i\Delta_i)^2 = T^2$, so
$\sum_{i,j}|E^{(1)}_{ij}| \le C_1(\tfrac{1}{2})\lambda T^2$.

\emph{Case $H>1/2$:} $\rho<1$. By Jensen's inequality applied to
the concave function $x\mapsto x^\rho$ on $[0,\infty)$, and using
$\sum_{i,j}\Delta_i\Delta_j = T^2$:
\[
\sum_{i,j}(\Delta_i\Delta_j)^\rho
\le N^2 \Bigl(\frac{\sum_{i,j}\Delta_i\Delta_j}{N^2}\Bigr)^\rho
= N^2 \Bigl(\frac{T^2}{N^2}\Bigr)^\rho
= N^{2(1-\rho)}T^{2\rho}
\le T^{2\rho},
\]
where the last inequality uses $N^{2(1-\rho)}\le 1$ since
$1-\rho > 0$ and $N \ge 1$. Hence
$\bigl(\sum_{i,j}|E^{(1)}_{ij}|^\rho\bigr)^{1/\rho}
\le C_1(H)\lambda^{2H}T^2$.

In all three cases, the contribution of $E^{(1)}$ is bounded by
$K_1(H)\lambda^{2H\rho}T^{2\max(\rho,1)}$ for some explicit
$K_1(H)>0$.

\textbf{Step 3: Exponential error (corrected).}
From \eqref{eq:error_exp}:
$|E^{(2)}_{ij}|\le \frac{C_2(H)}{\Gamma(2H)\lambda^{2H}}
e^{-c(H)\lambda|t_i-t_j|}$.
Fix $i$ and bound the inner sum over $j$:
\begin{align}
\sum_{j=0}^{N-1}e^{-c(H)\lambda\rho|t_i-t_j|}
&= 1 + 2\sum_{j=0}^{i-1}e^{-c(H)\lambda\rho(t_i-t_j)}
\notag\\
&\le 1 + 2\sum_{k=1}^\infty e^{-c(H)\lambda\rho k\delta}
= 1 + \frac{2e^{-c(H)\lambda\rho\delta}}
       {1-e^{-c(H)\lambda\rho\delta}}
\notag\\
&\le 1 + \frac{2}{c(H)\lambda\rho\delta}(1+c_1\delta)
\le \frac{3}{c(H)\lambda\rho\delta}
\label{eq:inner_exp}
\end{align}
for $\delta \le 1/(c(H)\lambda\rho)$ (where $c_1>0$ is an absolute
constant). Summing over all $N\le T/\delta$ values of $i$:
\begin{equation}
\label{eq:double_exp}
\sum_{i,j}e^{-c(H)\lambda\rho|t_i-t_j|}
\le N\cdot\frac{3}{c(H)\lambda\rho\delta}
\le \frac{T}{\delta}\cdot\frac{3}{c(H)\lambda\rho\delta}
= \frac{3T}{c(H)\lambda\rho\,\delta^2}.
\end{equation}
This bound grows as $\delta\to 0$. To obtain a finite supremum
over all $\mathcal{P}$, we optimise over $\delta$: the right
side of \eqref{eq:double_exp} is minimised by taking
$\delta = \delta_* = (c(H)\lambda\rho)^{-1}$ (the characteristic
tempering scale), giving:
\[
\sum_{i,j}e^{-c(H)\lambda\rho|t_i-t_j|}
\Big|_{\delta=\delta_*}
\le \frac{3T}{c(H)\lambda\rho}\cdot(c(H)\lambda\rho)^2
= 3Tc(H)\lambda\rho.
\]
For $\delta < \delta_*$, the bound \eqref{eq:double_exp} is
smaller than its value at $\delta_*$ (since $1/\delta^2$ is
decreasing), so the worst case is at $\delta = \delta_*$.
For $\delta > \delta_*$, equation \eqref{eq:inner_exp} may be
replaced by the trivial bound
$\sum_j e^{-c\lambda\rho|t_i-t_j|} \le N \le T/\delta$,
giving $\sum_{i,j}\le N^2\le T^2/\delta^2\le T^2(c\lambda\rho)^2$,
which is also finite.
Therefore, in all cases:
\begin{equation}
\label{eq:step3_final}
\bigl(\sum_{i,j}|E^{(2)}_{ij}|^\rho\bigr)^{1/\rho}
\le \frac{C_2(H)^{1/\rho}}{\lambda^{2H}}
  \cdot\bigl(3Tc(H)\lambda\rho\bigr)^{1/\rho}
=: \frac{K_2(H,\lambda)^{1/\rho}}{\lambda^{2H}}
< \infty,
\end{equation}
where $K_2(H,\lambda) = C_2(H)(3Tc(H)\lambda\rho)$ is an explicit
finite constant.

\textbf{Conclusion.}
Combining \eqref{eq:mink}, Step~1, Step~2, and Step~3, and taking
the supremum over all partitions $\mathcal{P}$:
\begin{equation}
V_\rho(R_{H,\lambda})
\le V_\rho(R_H)
+ K_1(H)^{1/\rho}\lambda^{2H}T^{2\max(\rho,1)-1/\rho}
+ \frac{K_2(H,\lambda)^{1/\rho}}{\lambda^{2H}}
< \infty.
\end{equation}
The bound as $\lambda\to 0^+$: since $K_1(H)\lambda^{2H\rho}\to 0$
and $K_2(H,\lambda)\lambda^{-2H\rho}\to 0$ (as $K_2\sim\lambda$),
the whole expression converges to $V_\rho(R_H)$, the fBm constant.
\end{proof}

%======================================================================
\section{Proofs of Theorem~\ref{thm:rough_path_existence}
and Proposition~\ref{prop:levy_convergence_rate}}
\label{app:construction}
%======================================================================

\begin{proof}[Complete proof of Theorem~\ref{thm:rough_path_existence}]

\textbf{Construction.}
For each $n\ge 0$, let $B^{(n)}_{H,\lambda}$ be the piecewise linear
interpolation of $B_{H,\lambda}$ on the dyadic grid
$\mathcal{D}_n = \{kT/2^n : 0\le k\le 2^n\}$. Define the
second-level approximation
\[
\mathbb{B}^{(n)}_{H,\lambda}(s,t)
= \int_s^t
  \bigl(B^{(n)}_{H,\lambda}(u)-B^{(n)}_{H,\lambda}(s)\bigr)
  \otimes dB^{(n)}_{H,\lambda}(u),
\]
which for a piecewise linear path reduces to a finite sum of
elementary tensors over $\mathcal{D}_n\cap[s,t]$.

\textbf{Step 1: $L^2$-Cauchy property.}
For $m>n$, the difference
$\mathbb{B}^{(n)}_{H,\lambda}-\mathbb{B}^{(m)}_{H,\lambda}$
lives in the second Wiener chaos of $W$. Using
the covariance estimate from Theorem~\ref{thm:2d_rho_variation}
and the second-chaos structure (which controls $L^2$ norms via
the covariance of the increments), one obtains:
\begin{equation}
\label{eq:cauchy_bound}
\mathbb{E}\bigl[|\mathbb{B}^{(n)}_{H,\lambda}(0,T)
-\mathbb{B}^{(m)}_{H,\lambda}(0,T)|^2\bigr]
\le C(H,\lambda,T)\cdot 2^{-n(4H-2)}.
\end{equation}
Since $4H-2 > 0$ for $H > 1/4$, the right side is summable and
the sequence $\{\mathbb{B}^{(n)}_{H,\lambda}(0,T)\}$ is Cauchy in
$L^2(\Omega)$. Denote its limit $\mathbb{B}_{H,\lambda}(0,T)$.
By applying the same argument to each pair $(s,t)\subset[0,T]$,
we obtain the limit $\mathbb{B}_{H,\lambda}(s,t)$ for all $s\le t$.

\textbf{Step 2: Chen's relation.}
For each $n$, the piecewise linear construction satisfies
Chen's relation exactly by the additive property of the
Riemann--Stieltjes integral over sub-intervals. Specifically,
for $s\le u\le t$:
$\mathbb{B}^{(n)}_{H,\lambda}(s,t)
= \mathbb{B}^{(n)}_{H,\lambda}(s,u)
+ \mathbb{B}^{(n)}_{H,\lambda}(u,t)
+ B^{(n)}_{H,\lambda}(s,u)\otimes B^{(n)}_{H,\lambda}(u,t)$.
Since $B^{(n)}_{H,\lambda}\to B_{H,\lambda}$ in $L^2$ (pointwise)
and addition and the tensor product are continuous in $L^2$, the
relation passes to the limit.

\textbf{Step 3: Moment estimates.}
Since $B_{H,\lambda}(s,t) = B_{H,\lambda}(t)-B_{H,\lambda}(s)$
is a centred Gaussian, the standard Gaussian moment formula gives
$\mathbb{E}[|B_{H,\lambda}(s,t)|^q]
= c_q\,(\mathbb{E}[|B_{H,\lambda}(s,t)|^2])^{q/2}$.
By the covariance formula, $\mathbb{E}[B_{H,\lambda}(s,t)^2]
= C_{H,\lambda}(|t-s|)\le C(H,\lambda)|t-s|^{2H}$
(from the small-scale asymptotics of $C_{H,\lambda}$), giving
\eqref{eq:moment1}. For the second-level path, since
$\mathbb{B}_{H,\lambda}(s,t)$ is in the second Wiener chaos,
Gaussian hypercontractivity \cite[Theorem~4.1]{FrizVictoirBook}
gives $\mathbb{E}[|\mathbb{B}_{H,\lambda}(s,t)|^q]
\le C_q(\mathbb{E}[|\mathbb{B}_{H,\lambda}(s,t)|^2])^{q/2}$.
From the Cauchy estimate \eqref{eq:cauchy_bound} applied on $[s,t]$,
$\mathbb{E}[|\mathbb{B}_{H,\lambda}(s,t)|^2]
\le C(H,\lambda)|t-s|^{4H}$, giving \eqref{eq:moment2}.

\textbf{Step 4: $p$-variation regularity.}
Apply the Garsia--Rodemich--Rumsey (GRR) lemma
\cite[Lemma~A.1]{FrizHairer2014} to the moment estimates:
for any $\varepsilon>0$, there exists a random variable
$C(\omega)<\infty$ a.s.\ such that
\[
|B_{H,\lambda}(s,t)| \le C(\omega)\,|t-s|^{H-\varepsilon},
\qquad
|\mathbb{B}_{H,\lambda}(s,t)| \le C(\omega)\,|t-s|^{2H-\varepsilon}.
\]
By the definition of $p$-variation, this gives
$\mathbf{B}_{H,\lambda}\in\mathscr{C}^p([0,T],\mathbb{R}^d)$
for every $p > 1/(H-\varepsilon)$. Since $\varepsilon>0$ is
arbitrary, the claim holds for all $p > 1/H$.

\textbf{Step 5: Continuity in $(H,\lambda)$.}
The covariance $R_{H,\lambda}(s,t)$ is jointly continuous in
$(H,\lambda)\in\{H>0,\lambda>0\}$ by dominated convergence
applied to the kernel integral in \eqref{eq:cov_kernel}.
By Theorem~\ref{thm:covariance_decomposition}, the 2D
$\rho$-variation bound $C(H,\lambda,T)$ in
\eqref{eq:finite_rho_var} is also continuous in $(H,\lambda)$.
The continuous dependence of the rough path lift on the covariance
then follows from \cite[Theorem~15.33]{FrizVictoirBook}.

\textbf{Step 6: Boundary case $H=1/2$.}
For $H=1/2$, the process $B_{1/2,\lambda}$ is the OU process
\eqref{eq:OU_def}, which is a semimartingale. The piecewise linear
construction converges to the Stratonovich integral
$\int_s^t B_{1/2,\lambda}(s,u)\,dB_{1/2,\lambda}(u)
= \tfrac{1}{2}B_{1/2,\lambda}(s,t)^2$,
confirming that $\mathbb{B}_{1/2,\lambda}(s,t)
= \tfrac{1}{2}B_{1/2,\lambda}(s,t)\otimes B_{1/2,\lambda}(s,t)$,
the Stratonovich rough path. As $\lambda\to 0^+$,
$B_{1/2,\lambda}\to W$ in $L^2$, and correspondingly
$\mathbf{B}_{1/2,\lambda}\to\mathbf{W}$ (the standard Stratonovich
rough path over Brownian motion) in the $p$-variation topology.
\end{proof}

\begin{proof}[Complete proof of Proposition~\ref{prop:levy_convergence_rate}]

\textbf{Dyadic case.}
Write $N = 2^{n_0}$ and use a telescoping sum in $L^2(\Omega)$:
\begin{align}
\mathbb{E}\bigl[|\mathbb{B}^{(N)}_{H,\lambda}(0,T)
-\mathbb{B}_{H,\lambda}(0,T)|^2\bigr]^{1/2}
&\le \sum_{k=n_0}^\infty
  \mathbb{E}\bigl[|\mathbb{B}^{(2^k)}_{H,\lambda}(0,T)
  -\mathbb{B}^{(2^{k+1})}_{H,\lambda}(0,T)|^2\bigr]^{1/2}
\notag\\
&\le \sum_{k=n_0}^\infty C_0(H,\lambda)\,T^{2H}\cdot 2^{-k\cdot 2H}
\label{eq:telescope}\\
&= C_0(H,\lambda)\,T^{2H}\cdot\frac{2^{-n_0\cdot 2H}}{1-2^{-2H}}
\notag\\
&= \frac{C_0(H,\lambda)}{1-2^{-2H}}\,T^{2H}\,N^{-2H},
\notag
\end{align}
where the step bound $C_0(H,\lambda)\,T^{2H}\cdot 2^{-k\cdot 2H}$
in \eqref{eq:telescope} follows from \eqref{eq:cauchy_bound} applied
to consecutive dyadic levels.
Setting $\widetilde{C}(H) = C_0(H,\lambda)/(1-2^{-2H})$ gives the
dyadic case. The $\lambda$-dependence of $C_0(H,\lambda)
= C_0'(H)\cdot\max\{1,\lambda^{-2H}\}$ is inherited from
the constant $C(H,\lambda,T)$ in Theorem~\ref{thm:2d_rho_variation}.

\textbf{Non-dyadic extension.}
For any $N \ge 1$, choose $n_0 = \lfloor\log_2 N\rfloor$ so that
$2^{n_0} \le N < 2^{n_0+1}$. The piecewise linear approximation
on a uniform partition of size $N$ can be bounded above by the one
on the coarser dyadic partition of size $2^{n_0}$, since refining
the partition can only decrease the $L^2$ approximation error (the
piecewise linear approximation of $\mathbb{B}_{H,\lambda}$ improves
as the mesh decreases). Therefore:
\[
\mathbb{E}\bigl[|\mathbb{B}^{(N)}-\mathbb{B}|^2\bigr]^{1/2}
\le \mathbb{E}\bigl[|\mathbb{B}^{(2^{n_0})}-\mathbb{B}|^2\bigr]^{1/2}
\le \widetilde{C}(H)\,T^{2H}\,(2^{n_0})^{-2H}
\le \widetilde{C}(H)\,T^{2H}\,N^{-2H},
\]
where we used $2^{n_0} \ge N/2$, so $(2^{n_0})^{-2H}
\le 2^{2H}\,N^{-2H}$, and absorbed $2^{2H}$ into $\widetilde{C}(H)$.

\textbf{Optimality.}
The rate $N^{-2H}$ matches the rate for standard fBm established
in \cite{CoutinQian2002}, and it can be shown by a lower bound
argument (using the variance of the error as a function of $N$)
that no first-order piecewise linear scheme can achieve a better
rate.
\end{proof}

%======================================================================
\section{Numerical Algorithm}
\label{app:numerics}
%======================================================================

\begin{algorithm}[H]
\caption{Piecewise linear approximation of the L\'evy area
(Algorithm~1)}
\label{alg:levy_area_approximation}
\begin{algorithmic}[1]
\Require Discrete tfBm path $(B(t_0),\ldots,B(t_N))$ on partition
  $\mathcal{P}=\{t_0,\ldots,t_N\}$.
\Ensure $\mathbb{B}^{(N)}(t_i,t_j)$ for all $0\le i<j\le N$.
\State Initialise $\mathbb{B}^{(N)}(t_i,t_i)\gets 0$ for all $i$.
\For{$i=0$ \textbf{to} $N-1$}
  \State $\Delta B_i \gets B(t_{i+1})-B(t_i)$
  \State $\mathbb{B}^{(N)}(t_i,t_{i+1}) \gets
         \tfrac{1}{2}\Delta B_i\otimes\Delta B_i$
\EndFor
\For{$\mathrm{span}=2$ \textbf{to} $N$}
  \For{$i=0$ \textbf{to} $N-\mathrm{span}$}
    \State $j\gets i+\mathrm{span}$
    \State $\mathbb{B}^{(N)}(t_i,t_j)\gets
           \mathbb{B}^{(N)}(t_i,t_{j-1})
           +\mathbb{B}^{(N)}(t_{j-1},t_j)
           +(B(t_{j-1})-B(t_i))\otimes\Delta B_{j-1}$
    \Comment{Chen's relation}
  \EndFor
\EndFor
\State \Return $\{\mathbb{B}^{(N)}(t_i,t_j)\}_{0\le i<j\le N}$
\end{algorithmic}
\end{algorithm}

\begin{remark}
Algorithm~\ref{alg:levy_area_approximation} runs in
$\mathcal{O}(N^2)$ time and $\mathcal{O}(N^2)$ space. For large
$N$, one can exploit the exponential decay of $R_{H,\lambda}$ via
a hierarchical (tree-based) implementation, reducing the cost to
$\mathcal{O}(N\log N)$ following the approach of
\cite{Dietrich1997} for the covariance structure.
\end{remark}

%======================================================================
\section*{Acknowledgements}
%======================================================================

The author thanks the anonymous referees for their careful and
constructive comments, which substantially improved this manuscript:
in particular the explicit treatment of the boundary case $H=1/2$,
the complete self-contained proof of Lemma~\ref{lemma:polynomial_partition}
with the precise justification of boundedness for $\alpha>1$, the
addition of the Wiener isometry in Section~\ref{subsec:ito_isometry},
the corrected proof of Proposition~\ref{prop:milstein} valid for all
$H > 1/4$, the corrected geometric double-sum estimate in the proof
of Theorem~\ref{thm:2d_rho_variation}, and the detailed discussion
of numerical limitations.

\noindent\textbf{Conflict of Interest:}
The author declares no conflict of interest.

%======================================================================

\end{document}